\begin{document}

\setcounter{secnumdepth}{3}
\setcounter{tocdepth}{2}

\newtheorem{definition}{Definition}[section]
\newtheorem{lemma}[definition]{Lemma}
\newtheorem{sublemma}[definition]{Sublemma}
\newtheorem{corollary}[definition]{Corollary}
\newtheorem{proposition}[definition]{Proposition}
\newtheorem{theorem}[definition]{Theorem}

\newtheorem{remark}[definition]{Remark}
\newtheorem{example}[definition]{Example}
\newtheorem{question}[definition]{Question}
\newtheorem{conjecture}[definition]{Conjecture}

\newcommand{\cov}{\mathrm{covol}}
\def \tr{{\mathrm{tr}}}
\def \det{{\mathrm{det}\;}}
\def\co{\colon\tanhinspace}
\def\I{{\mathcal I}}
\def\N{{\mathbb N}}
\def\R{{\mathbb R}}
\def\C{{\mathbb C}}
\def\Z{{\mathbb Z}}
\def\Sph{{\mathbb S}}
\def\Tor{{\mathbb T}}
\def\Disk{{\mathbb D}}
\def\Hess{\mathrm{Hess}}
\def\rad{\mathbf{v}}

\def\H{{\mathbb H}}
\def\RP{{\mathbb R}{\mathrm{P}}}
\def\dS{{\mathrm d}{\mathbb{S}}}
\def\Isom{\mathrm{Isom}}

\def\sh{\mathrm{sinh}\,}
\def\ch{\mathrm{cosh}\,}
\newcommand{\arccosh}{\mathop{\mathrm{arccosh}}\nolimits}
\newcommand{\oh}{\overline{h}}

\renewcommand\b[2]{\langle #1,#2\rangle_-}

\newcommand{\mf}{\mathfrak}
\newcommand{\mb}{\mathbb}
\newcommand{\ol}{\overline}
\newcommand{\la}{\langle}
\newcommand{\ra}{\rangle}
\newcommand{\hess}{\mathrm{Hess}\;}
\newcommand{\grad}{\mathrm{grad}}
\newcommand{\M}{\mathrm{MA}}
\newcommand{\II}{\textsc{I\hspace{-0.05 cm}I}}
\renewcommand{\d}{\mathrm{d}}
\newcommand{\A}{\mathrm{A}}
\renewcommand{\L}{\mathcal{L}}
\newcommand{\FB}[1]{{\color{red}#1}}
\newcommand{\note}[1]{{\color{blue}{\small #1}}}
\newcommand{\ap}{\mathrm{area}}
\newcommand{\am}{\mathrm{area}}
\renewcommand{\a}{\mathrm{area}}

\renewcommand{\v}{\mathrm{vol}}

\renewcommand{\thefootnote}{\fnsymbol{footnote}}

\setlength{\abovedisplayshortskip}{1pt}
\setlength{\belowdisplayshortskip}{3pt}
\setlength{\abovedisplayskip}{3pt}
\setlength{\belowdisplayskip}{3pt}

\title
{A remark on spaces of flat metrics with cone singularities of constant 
sign curvatures}

\author{Fran\c{c}ois Fillastre and Ivan Izmestiev}
\date{v2 \today}

\maketitle

\textbf{Keywords: Flat metrics, convex polyhedra, mixed volumes, Minkowski 
space, covolume}

\footnotetext{}     

\setcounter{tocdepth}{3}

\begin{abstract}
By a result of W.~P. Thurston, the moduli space of flat metrics on the sphere 
with $n$ cone singularities of prescribed positive curvatures is a complex 
hyperbolic orbifold of dimension $n-3$. The Hermitian form comes from the area 
of the metric. Using geometry of Euclidean polyhedra, 
we observe that this space has a natural decomposition into real hyperbolic 
convex polyhedra
of dimensions $n-3$ and $\leq \frac{1}{2}(n-1)$.

By a result of W.~Veech, the moduli space of flat metrics on a compact surface 
with cone singularities of prescribed negative curvatures has a foliation whose leaves have a local structure of complex 
pseudo-spheres. The complex structure comes again from the area of the metric.
The form can be degenerate; its signature depends on the curvatures prescribed.
Using polyhedral surfaces in Minkowski space, we show that this moduli space 
has a natural decomposition into spherical convex polyhedra.

\end{abstract}

\tableofcontents

\section{Flat metrics}
Consider a collection of Euclidean triangles glued isometrically along some pairs of edges
so that the resulting topological space is homeomorphic to a closed orientable surface $S$.
This puts on $S$ a path metric $m$ which is a \emph{flat metric with conical singularities}.
The metric $m$ can also be described by means of an atlas whose targets are open subsets
of the Euclidean plane and of the Euclidean cones and whose transition maps are local Euclidean isometries.
The two definitions are equivalent, because any flat metric with conical singularities has a geodesic triangulation.
Note that a \emph{geometric flip} as on Figure~\ref{flip} changes the triangulation but preserves the metric.
We refer to \cite{troyanov2,bon} for more details.

Let $p_i$ be a vertex of the triangulation and let $\alpha_i$ be the sum of the angles around $p_i$.
If $\alpha_i \ne 2\pi$, then $p_i$ is called a \emph{cone singularity} with the \emph{cone angle} $\alpha_i$.
The \emph{(singular) curvature} at $p_i$ is
\[
k_i=2\pi - \alpha_i~.
\]
See Figure~\ref{courbure} and Figure~\ref{ex met plates} for examples. 

\begin{figure}
\begin{center}
\psfrag{a}{$\textcolor{blue}{\alpha < 2\pi}$}
\psfrag{kp}{$\textcolor{red}{k>0}$}
\psfrag{ag}{$\textcolor{blue}{\alpha > 2\pi}$}
\psfrag{kn}{$\textcolor{red}{k<0}$}
\includegraphics[width=0.7\linewidth]{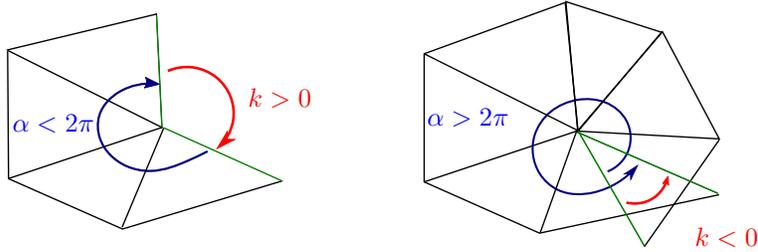}\caption{A vertex of positive curvature and a vecter of negative curvature.}\label{courbure}
\end{center}
\end{figure}

Throughout the paper, $n$ denotes the number of cone singularities of $(S, m)$.

\begin{lemma}[Discrete Gauss--Bonnet formula]
For every flat metric with cone singularities on a surface of genus $g$, we have
\begin{equation}\label{GB}
\sum_{i=1}^n k_i = 2\pi(2-2g)~.
\end{equation}
\end{lemma}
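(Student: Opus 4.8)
The plan is to use a geodesic triangulation of $(S,m)$ and compute the sum of angles in two different ways. First I would fix a geodesic triangulation $T$ of the surface whose vertex set contains all the cone singularities; such a triangulation exists by the remark in the text that any flat metric with conical singularities admits one. Denote by $V$, $E$, $F$ the numbers of vertices, edges and triangular faces of $T$, and recall the Euler relation $V - E + F = 2 - 2g$, together with the incidence identity $3F = 2E$ coming from the fact that each triangle has three edges and each edge is shared by exactly two triangles.

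\medskip

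Next I would compute the total angle sum $\Sigma := \sum_{\text{triangles } t}\bigl(\text{sum of the three angles of } t\bigr)$ in two ways. On the one hand, every triangle is Euclidean, so its angles sum to $\pi$, giving $\Sigma = \pi F$. On the other hand, grouping the angle contributions by the vertex at which they sit, we get $\Sigma = \sum_{v \in V}\alpha_v$, where $\alpha_v$ is the total cone angle at $v$ (this equals $2\pi$ at a non-singular vertex). Writing $\alpha_v = 2\pi - k_v$ with $k_v = 0$ whenever $v$ is not a cone singularity, this becomes $\Sigma = 2\pi V - \sum_{v} k_v = 2\pi V - \sum_{i=1}^{n} k_i$. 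Equating the two expressions yields $\sum_{i=1}^n k_i = 2\pi V - \pi F$.

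\medskip

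Finally I would eliminate $V$ and $F$ using the combinatorial identities. From $3F = 2E$ we get $E = \tfrac{3}{2}F$, and substituting into $V - E + F = 2 - 2g$ gives $V - \tfrac{1}{2}F = 2 - 2g$, i.e. $2\pi V - \pi F = 2\pi(2 - 2g)$. Combined with the previous step this is exactly \eqref{GB}.

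\medskip

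The only real subtlety — and the step I would be most careful about — is the existence of a geodesic triangulation all of whose vertices include the prescribed cone points (and, ideally, whose vertex set is \emph{exactly} the set of cone points, though allowing extra ordinary vertices with $k_v = 0$ costs nothing since they drop out of the sum). This is precisely the fact quoted from \cite{troyanov2,bon}, so the proof reduces to bookkeeping once that is invoked; one should also note that the left-hand side of \eqref{GB} is manifestly independent of the chosen triangulation, so no consistency check across triangulations is needed.
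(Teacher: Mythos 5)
Your proposal is correct and follows essentially the same argument as the paper: a geodesic triangulation, the Euler formula combined with $3F=2E$, and the double count of the total angle sum (once as $\pi$ per Euclidean triangle, once grouped by vertex). The only cosmetic difference is that you allow extra non-singular vertices with $k_v=0$, which, as you note, drop out of the sum.
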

\begin{proof}
Let $T$ the number of triangles and $E$ the number of edges in some geodesic triangulation of $(S, m)$.
We have $E=\frac{3}{2}T$, so that the Euler formula
\[
T-E+n =  2-2g
\]
implies
\[
2n-T=2 (2-2g)~.
\]
On the other hand, the sum of all the angles of all the triangles is equal to the sum of all the cone angles, hence
\[
\sum_{i=1}^n k_i = 2\pi n - \sum_{i=1}^n\alpha_i = 2\pi n - \pi T = 2\pi(2-2g)~.
\]
\end{proof}

\begin{figure}[ht]
\begin{center}
\psfrag{t1}{$\theta_1$}
\psfrag{t2}{$\theta_2$}
\psfrag{t3}{$\theta_3$}
\psfrag{2t1}{$2\theta_1$}
\psfrag{2t2}{$2\theta_2$}
\psfrag{2t3}{$2\theta_3$}
\psfrag{2p}{$2 \pi$}
\psfrag{3p}{$3 \pi$}
\psfrag{3p4}{$\frac{3\pi}{4}$}
\includegraphics[width=0.5\linewidth]{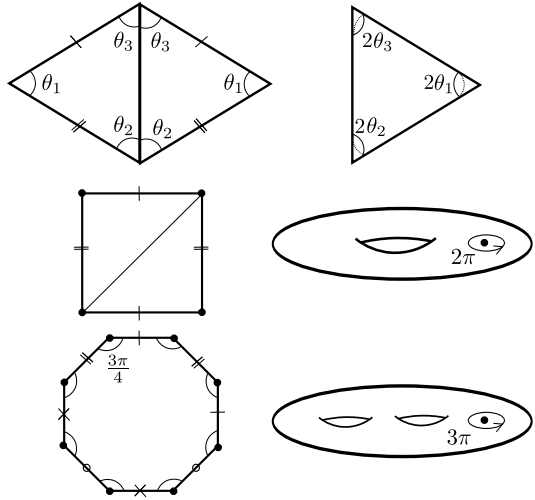}
\end{center}
\caption{Examples of flat metrics for surfaces of different genus.}\label{ex met plates}
\end{figure}

For any collection of cone angles
\[
\alpha=(\alpha_1,\ldots,\alpha_n) \in \left((0,2\pi) \cup (2\pi, +\infty)\right)^n
\]
let $\mathcal{M}(S,\alpha)$ be the set of all flat metrics on $S$ with the cone angles $\alpha$
modulo orientation-preserving similarities isotopic to the identity.
(In particular, the similarity has to send $p_i$ to $p_i$ for all $i$.)

By a theorem of M.~Troyanov,  $\mathcal{M}(S,\alpha)$  is not empty if and only if $\alpha$ satisfies the discrete Gauss--Bonnet formula.
In this case, $\mathcal{M}(S,\alpha)$ has a structure of a complex manifold of (complex) dimension $3g-3+n$ \cite{troyanov1,troyanov2}.

\begin{figure}[ht]
\begin{center}
\includegraphics[width=0.4\linewidth]{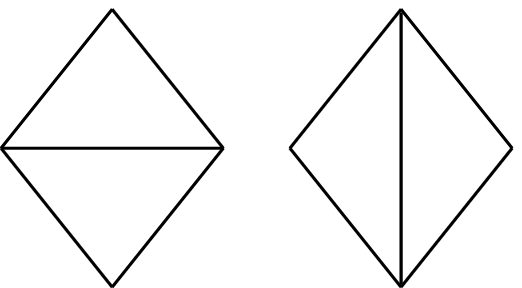}
\end{center}
\caption{Flip.}\label{flip}
\end{figure}

\subsection{Positive curvatures}

Here we consider the case of $\alpha \in (0,2\pi)^n$, that is of $k_i > 0$ for all $i$.
The discrete Gauss-Bonnet formula implies that a flat metric with positive curvatures can exist only on the sphere.
Besides, the curvatures must satisfy
\[
\sum_{i=1}^n \kappa_i = 4\pi~.
\]
Since $\kappa_i < 2\pi$, we have $n \ge 3$.

A convex polytope in $\R^3$ is the convex hull of finitely many points whose affine hull is all of $\R^3$.
The intrinsic metric on the boundary of a convex polytope is a flat metric with positive curvatures.
Vice versa, a famous theorem of A.~D.~Alexandrov \cite{alex2} says that
any flat metric with positive curvatures on the sphere can be obtained in this way (if we allow the polytope to degenerate to a convex polygon).

\begin{theorem}[A.~D.~Alexandrov]
\label{thm:Alex}
Let $m$ be a flat metric with positive curvature on the sphere.
Then either $(\Sph^2, m)$ is isometric to a doubly-covered convex polygon (see Figure \ref{ex met plates}, top),
or there is a unique up to isometry convex polytope whose boundary is isometric to $(\Sph^2, m)$.
\end{theorem}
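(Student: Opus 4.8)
This is A.~D.~Alexandrov's theorem; it consists of an existence statement and a uniqueness statement, which I would treat together by the classical \emph{continuity method}. Fix $n\geq 3$, let $\mathcal{M}_n$ be the space of flat cone metrics on $\Sph^2$ with $n$ cone points of positive curvature, taken up to isometry, and let $\mathcal{P}_n$ be the space of convex polytopes in $\R^3$ with exactly $n$ vertices --- doubly-covered convex $n$-gons included as degenerate members --- taken up to $\Isom(\R^3)$. There is a natural continuous \emph{boundary map} $\Phi\colon\mathcal{P}_n\to\mathcal{M}_n$ sending $P$ to the intrinsic metric of $\partial P$ (well defined by the discussion above), and since the vertices of a realizing polytope are exactly the cone points of the metric, the theorem amounts to saying that $\Phi$ is a bijection, the dichotomy in the statement recording only whether the realizing polytope is flat or genuinely $3$-dimensional. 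A dimension count gives $\dim\mathcal{M}_n=\dim\mathcal{P}_n=3n-6$: a metric is recorded by the $3n-6$ edge lengths of a triangulation with the cone points as vertices, subject only to open triangle inequalities, while $n$ points of $\R^3$ modulo the $6$-dimensional group $\Isom(\R^3)$ again give $3n-6$; this matches Troyanov's real dimension $2(n-3)$ for the moduli space with the curvatures fixed and metrics taken modulo similarity, once one adds back the $n-1$ free curvatures and the one scaling parameter.

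\textbf{Uniqueness (injectivity of $\Phi$).} This is Alexandrov's rigidity theorem for convex polytopes, the polyhedral descendant of Cauchy's. Given convex polytopes $P,P'$ and an isometry $f\colon\partial P\to\partial P'$, subdivide both polyhedral structures to a common geodesic triangulation --- tracking flat (dihedral angle $\pi$) edges with care, since $f$ need not carry edges of $P$ to edges of $P'$ --- and mark each edge $+$, $-$ or $0$ according to whether its dihedral angle increases, decreases, or is unchanged on passing from $P$ to $P'$. The Cauchy arm lemma (opening the interior angles of a convex spherical polygonal arm increases the distance between its endpoints) shows that no vertex can have all of its incident nonzero edges of the same sign; Cauchy's combinatorial lemma then yields at least four sign changes in the cyclic edge sequence around every vertex meeting a nonzero edge, while Euler's formula bounds the total number of sign changes from above. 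The two are incompatible unless every edge is labelled $0$, in which case $f$ extends to a motion of $\R^3$. Injectivity at a degenerate member is trivial, since a doubly-covered polygon is obviously recovered from its intrinsic metric.

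\textbf{Existence (surjectivity of $\Phi$).} Here I would run the standard topological argument. The space $\mathcal{M}_n$ is connected. Let $\mathcal{P}_n^{\circ}\subset\mathcal{P}_n$ and $\mathcal{M}_n^{\circ}\subset\mathcal{M}_n$ be the open subsets of genuinely $3$-dimensional polytopes and of metrics not isometric to a doubly-covered polygon; both are $(3n-6)$-manifolds, and $\Phi^{-1}(\mathcal{M}_n^{\circ})=\mathcal{P}_n^{\circ}$ (by uniqueness a $3$-dimensional polytope cannot share its boundary metric with a flat one). By injectivity and invariance of domain, $\Phi(\mathcal{P}_n^{\circ})$ is open in $\mathcal{M}_n^{\circ}$. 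The heart of the argument is that $\Phi\colon\mathcal{P}_n^{\circ}\to\mathcal{M}_n^{\circ}$ is \emph{proper}: if the boundary metrics of a sequence of $3$-dimensional polytopes converge to a non-degenerate metric, the polytopes subconverge (Blaschke selection) to a $3$-dimensional polytope realizing it --- their diameters can neither blow up nor collapse, since intrinsic and extrinsic diameters of a convex surface are each bounded by a fixed multiple of the other, and they cannot flatten, for a flattening sequence would have boundary metrics converging to that of a doubly-covered polygon, hence degenerate. Properness makes $\Phi(\mathcal{P}_n^{\circ})$ closed in $\mathcal{M}_n^{\circ}$; being also open and nonempty in the connected manifold $\mathcal{M}_n^{\circ}$ (connected because the degenerate locus has codimension $n-3\geq 2$ once $n\geq 5$, the cases $n=3,4$ being dealt with directly), it is all of $\mathcal{M}_n^{\circ}$. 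Every degenerate metric is realized by itself, so $\Phi$ is onto $\mathcal{M}_n$; with injectivity and properness, $\Phi$ is a homeomorphism, which is the theorem.

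\textbf{Where the difficulty lies.} The genuinely hard points are, on the uniqueness side, the global sign count together with the handling of flat edges (the combinatorial lemma is the spot Cauchy's original argument glossed over), and, on the existence side, the properness estimate --- ruling out quantitatively that a convex polytope grows, shrinks, or degenerates while its intrinsic metric stays controlled --- and the bookkeeping along the degenerate stratum. A completely different route to existence, nearer to the mixed-volume theme of this paper, is the variational proof of Bobenko--Izmestiev: one fixes the metric, parametrizes candidate polytopes by the radii from a chosen interior point, and obtains the realization as a critical point of an explicitly defined functional of generalized-volume type; a Hessian computation then replaces the properness estimate.
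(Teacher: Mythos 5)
The paper offers no proof of this statement: it is quoted verbatim from Alexandrov's work (\cite{alex2}) and used as a black box, so there is nothing internal to compare your argument against. On its own terms, your sketch is the standard and correct route --- Cauchy-type rigidity for uniqueness plus the continuity (open--closed) method for existence --- and you identify the genuinely hard points accurately; the Bobenko--Izmestiev variational alternative you mention is also real and is closer in spirit to the mixed-volume machinery this paper actually develops. A few spots deserve flagging as more than bookkeeping. First, the claim $\Phi^{-1}(\mathcal{M}_n^{\circ})=\mathcal{P}_n^{\circ}$ (a nondegenerate polytope never has the boundary metric of a doubly-covered polygon) does not follow from the Cauchy rigidity argument as you state it, since that argument compares two genuinely $3$-dimensional polytopes; Alexandrov has to extend uniqueness to degenerate realizations, where the spherical link at a rim vertex is a doubled arc and the arm lemma needs a separate treatment. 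Second, your local sign-count is slightly misattributed: the ``at least four sign changes around each vertex meeting a nonzero edge'' is the conclusion of the (strengthened) arm lemma applied to the spherical links, while Cauchy's combinatorial lemma is precisely the global Euler-formula count that contradicts it --- the chain of implications is right, the labels are swapped. Third, the manifold structure on $\mathcal{M}_n^{\circ}$ and $\mathcal{P}_n^{\circ}$ requires labelling cone points and vertices to kill symmetries (otherwise one gets orbifold points and invariance of domain needs care), and the connectivity of $\mathcal{M}_n^{\circ}$ for $n=4$, where the degenerate locus has codimension one, is not a throwaway remark. None of this invalidates the sketch as an outline of the classical proof, but each item is exactly where a written-out version would have to do real work.
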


Recall a celebrated construction of Thurston \cite{th,schwartz}, \cite{kojima}.
Cut $(\Sph^2,m)$ along a geodesic tree connecting the cone points.
The result is a disk with a flat metric and piecewise geodesic boundary.
The disk can be locally isometrically immersed in the complex plane as a polygon with possible self-intersections.
This construction is similar to unfolding a convex polytope.
On the problem of a non-self-intersecting unfolding see \cite{BG17}.

By the Euler formula, the disk has $2(n-1)$ sides. Each side gives a vector of the plane.
Identify $\R^2$ with $\C$, so that to represent the vectors by complex numbers.
One can prove that $n-2$ of these complex parameters suffice to recover the polygon, and together with it the metric on the sphere.
Since the metrics are considered up to similarities,
and a similarity corresponds to multiplication by a non-zero complex number,
this yields local charts from  
$\mathcal{M}(S,\alpha)$ to $\mathbb{C}\mathbb{P}^{n-3}$.
We will call the corresponding set in $\mathbb{C}^{n-2}$ an  \emph{unfolding chart}.

The \emph{area} of $m$ is the sum of the area of the triangles. 
In an  unfolding chart, the area extends to a Hermitian form on 
   $\mathbb{C}^{n-2}$. 
By induction on $n$, using a procedure of chasing the curvature,
Thurston proved that this form has signature $(1,n-3)$.
Since the image of the unfolding chart lies in the positive cone of the area quadratic form, 
the space $\mathcal{M}(S,\alpha)$ has a natural complex hyperbolic structure, and
\[
\dim_{\C} \mathcal{M}(S,\alpha) = n-3~.
\]

On the other hand, the metric $m$ can be realized as the double of a convex polygon or as the boundary of a convex polytope,
see Theorem \ref{thm:Alex}.
Consider two cases.

\textbf{Case 1.} $(\Sph^2, m)$ is isometric to a double of a convex polygon $P$.
Let $H(P)$ be the set of convex polygons, modulo translations, with edges parallel to those of $P$.
Such a polygon is determined by $n-2$ real parameters, and one can identify $H(P)$ with a convex polyhedral cone in $\R^{n-2}$.
Let $H_1(P)$ be the set of polygons from $H(P)$ of area one. The area of the polygons extends to a quadratic form on $\R^{n-2}$.
C.~Bavard and \'E.~Ghys \cite{BG} have observed that this quadratic form has signature $(1,n-3)$.
This makes $H_1(P)$ to a convex hyperbolic polyhedron, and we have
\[
\dim_{\R} H_1(P) = n-3~.
\]
See Section~\ref{polygons} for more details.

\textbf{Case 2.} $(\Sph^2, m)$ is isometric to the boundary of a convex polytope $P$.
Denote by $H(P)$ the set of convex polytopes, modulo translations, with faces parallel to those of
$P$ and with the same combinatorics as $P$.
(That is, if two faces of $P$ intersect along an edge,
then the two corresponding faces of every $Q \in H(P)$ also intersect along an edge.)
The set $H(P)$ has a structure of a $d$-dimensional convex polyhedral cone.
The dimension $d$ is maximal when $P$ is  \emph{simple},
i.~e. every vertex is incident to exactly three edges.
In this case, $d=f-3$, where $f$ is the number of faces of $P$.
By the Euler formula, if $n$ is the number of vertices, then $f=n/2+2$, thus $d=n/2-1$.
The minimum possible dimension $d$ is $1$.
This is attained if all $Q \in H(P)$ are homothetic to $P$, for example if $P$ is an octahedron.

Let $H_1(P)$ be the set of $Q \in H(P)$  with $\a(\partial Q) = 1$.  
The area of the boundary extends to a quadratic form on $\R^d$.  
As was observed in \cite{FI}, this quadratic form has signature
$(1,d-1)$. In particular, $H_1(P)$ is a convex hyperbolic polyhedron, and we have
\[
0 \le \dim_\R H_1(P) \le \frac{n}{2} - 2~.
\]
See Section~\ref{area polytope} for more details.

In each of the above cases there is an injective linear map from $\R^d$ to $\mathbb{C}^{n-2}$
such that the image of $H(P)$ is contained in an unfolding chart of $\mathcal{M}(S,\alpha)$.
This map preserves the area quadratic form.
The difference in the dimensions of $\mathcal{M}(S,\alpha)$ and $H_1(P)$ is explained by the fact that
in an unfolding chart the parameters of $H(P)$ allow to change only the lengths of the edges of the development, but not their directions.

\subsection{Negative curvatures}
Thurston's construction was generalized to the space of flat metrics with the curvatures of any signs by W.~Veech 
\cite{veech,manh,GP}.

A flat metric $m \in \mathcal{M}(S,\alpha)$ can be cut along a geodesic tree connecting the cone points.
This gives a disk with a flat metric and a piecewise geodesic boundary consisting of $2(2g-1+n)$ edges.
The manifold $\mathcal{M}(S,\alpha)$ is foliated by submanifolds $\mathcal{M}(S,\alpha, h)$ (called \emph{Veech leaves}),
where $h$ denotes the rotational part of the holonomy of the metric.
For $S = \Sph^2$, the rotational part of the holonomy is determined by the cone angles, for surfaces of higher genus it is not.
It turns out that for a fixed rotational part of the holonomy the shape of the disk (and hence the metric $m$)
can be recovered from $k$ complex parameters, where
\[
k=
\begin{cases}
2g-2+n,& \text{ if there is a cone angle which is not an integer multiple of }2\pi;\\
2g-1+n,& \text{ if all cone angles are integer multiples of }2\pi.
\end{cases}
\]
This yields local charts from $\mathcal{M}(S,\alpha,h)$ into $\mathbb{CP}^{k-1}$.
In particular, we have
\[
\dim_{\mathbb{C}}(\mathcal{M}(S,\alpha, h)) = 2g+n-3\, \mbox{ or }\, 2g+n-2~.
\]

As before, the area of a surface is a Hermitian form on $\mathbb{C}^k$ in the coordinates of an unfolding chart.
The signature of this quadratic form depends on the cone angles $\alpha$ in a complicated way described in Theorem~14.6 of \cite{veech}.
Table~\ref{tab sign} shows some examples in genus $2$ with all curvatures negative.

The following are some general facts about the signature of the area.
\begin{itemize}
\item If all cone angles are integer multiples of $2\pi$, then the signature is $(g-1,g-1)$.
In particular, there is an $(n+1)$-dimensional kernel.
\item If none of the cone angle is an integer multiple of $2\pi$, then the area quadratic form is non-degenerate.
\end{itemize}

\begin{table}
\begin{center}
\begin{tabular}{c|c|c}
$(a_1,\ldots,a_n)$ & signature & dimension \\
 \hline
 $(1,1)$   & $(1,1)$  & $5$ \\
$(1/2,1/2,1/2,1/2)$    & $(3,3)$  & $6$ \\
$(1/3,1/3,1/3,1/3,1/3,1/3)$ & $(5,3)$ & $8$ \\
$(4/3,1/3,1/3)$ & $(3,2)$ & $5$\\
$(1,1/2,1/2)$ & $(2,2)$ & $5$ \\
$(1,1/3,1/3,1/3)$ & $(3,2)$& $6$\\
\end{tabular}
\caption{The signature of the area form on a surface of genus $2$
in some special cases. Here $k_i=-2\pi a_i$. }\label{tab sign}
\end{center}
\end{table}

Let us now consider the case when $\alpha \in (2\pi, +\infty)^n$, that is $k_i < 0$ for all $i$.
The curvatures can satisfy the Gauss--Bonnet formula only for a surface of genus $g > 1$.

The following is an analog of Theorem \ref{thm:Alex}, see \cite{fillastre,leo}.
\begin{theorem}
\label{thm:FBr}
For any flat metric with negative curvatures on a closed orientable surface $S$
there is a convex  polyhedron $P$ in the Minkowski space $\R^{2,1}$
invariant under the action of a cocompact lattice $\Gamma$ of $SO(2,1)$
such that the boundary of $P/\Gamma$ is isometric to $(S, m)$. 
Moreover, $P$ is unique up to composition by Lorentzian linear isometries.
\end{theorem}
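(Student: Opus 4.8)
The plan is to mimic Alexandrov's approach (Theorem~\ref{thm:Alex}) in the Minkowski setting, replacing the sphere by a compact hyperbolic surface and convex polytopes in $\R^3$ by convex Fuchsian polyhedra in $\R^{2,1}$. First I would set up the right category of objects: a \emph{Fuchsian polyhedron} is a convex polyhedral surface $P \subset \R^{2,1}$ lying in the future cone of the origin, invariant under a cocompact Fuchsian group $\Gamma < SO(2,1)$ acting freely, such that $P/\Gamma$ is a compact polyhedral surface. The induced metric on such a $P$ is flat away from the vertices (each face is a piece of a spacelike plane, hence carries a Euclidean metric), and at each vertex the link is a spacelike polygonal curve whose total angle exceeds $2\pi$ because of the Lorentzian "opening up" — so the induced metric on $P/\Gamma$ is a flat metric with negative singular curvatures. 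This gives the easy direction (such $P$ produce the metrics in question); the content of the theorem is the converse, existence and uniqueness.

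For \textbf{existence} I would run Alexandrov's variational/deformation method equivariantly. Fix $(S,m)$ with negative curvatures $k_i = 2\pi - \alpha_i$; by the discrete Gauss--Bonnet formula $S$ has genus $g \ge 2$, and $m$ determines a conformal class, hence (by uniformization) a hyperbolic metric, hence a cocompact $\Gamma < SO(2,1)$ acting on $\H^2 \subset \R^{2,1}$, together with $n$ marked orbits of points on the hyperboloid. The space of candidate Fuchsian polyhedra with these combinatorics and this group is parametrized by the "heights" (support numbers) of the faces, an open convex cone $\mathcal{C}$ of some dimension $N$. On $\mathcal{C}$ the induced cone angles $\alpha_i$ are smooth functions of the heights. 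One shows the map $\mathcal{C} \to (2\pi,\infty)^n$, (heights) $\mapsto (\alpha_i)$, is a local diffeomorphism onto its image — this is where the Lorentzian analog of Alexandrov's Hessian computation / the non-degeneracy of the relevant mixed-volume form enters (cf. the signature statements for $H(P)$ cited from \cite{FI}) — and then proves properness, i.e. that $(\alpha_i)$ cannot escape to the boundary of $(2\pi,\infty)^n$ while the polyhedron degenerates, using that a degenerating convex Fuchsian polyhedron forces some angle to $\to 2\pi$ or $\to \infty$. Continuity of the domain-of-existence argument (openness $+$ closedness along the connected target) then yields a $P$ realizing the prescribed $\alpha_i$. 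One also needs to let the combinatorics vary: as in Alexandrov, allow face-degenerations and reindex, so that the realization is found in the union over all admissible combinatorial types.

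For \textbf{uniqueness} up to Lorentzian isometry I would use a rigidity (Cauchy-type) argument for convex Fuchsian polyhedra: if $P, P'$ are two such polyhedra with $\Gamma$-equivariantly isometric induced metrics, then an equivariant isometry $\partial(P/\Gamma) \to \partial(P'/\Gamma)$ is realized by an element of $O(2,1)$. This follows the Pogorelov/Alexandrov scheme — assume not, compare the two polyhedral embeddings, use the maximum principle on the combinatorial surface to derive a contradiction from the arrangement of signs of the difference of dihedral angles — adapted to the Minkowski metric; the relevant infinitesimal rigidity of convex Fuchsian polyhedra is exactly what the second-order (Hessian) nondegeneracy above encodes, and the global statement follows by the standard connectedness/continuity upgrade. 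The references \cite{fillastre,leo} carry out precisely this program, so the proof amounts to invoking their results; I would cite them for the details.

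\medskip

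\textbf{Main obstacle.} The genuinely hard step is the infinitesimal rigidity / Hessian non-degeneracy of convex Fuchsian polyhedra in $\R^{2,1}$: proving that deforming the face-heights changes the cone angles non-degenerately. In the Euclidean (Alexandrov) case this is the positivity of a certain quadratic form coming from mixed volumes; in the Minkowski case the analogous form is the Lorentzian "mixed covolume" form, and — as the table and the signature discussion in the excerpt already hint — it is \emph{not} positive definite but has signature $(1,d-1)$ on the area/covolume. One must check that this indefiniteness is harmless for the relevant linearized map (the bad direction corresponds precisely to rescaling, which is quotiented out), so that the map from heights to angles is still a local diffeomorphism. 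Threading this needle — together with the properness estimates controlling degeneration of Fuchsian polyhedra — is where the real work lies; the rest is a faithful transcription of Alexandrov's classical argument.
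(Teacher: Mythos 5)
The paper itself gives no proof of Theorem~\ref{thm:FBr}; it is quoted from \cite{fillastre,leo}, so your decision to defer the details to those references matches what the authors do, and your overall plan (Alexandrov-style deformation method for existence, infinitesimal rigidity plus a connectedness upgrade for uniqueness) is indeed the plan of the cited proofs. However, two steps of your sketch are wrong as stated. The more serious one is your ``main obstacle'' paragraph: you claim that the relevant quadratic form in the Minkowski setting is indefinite, of signature $(1,d-1)$, and that the real work is to show the bad directions are harmless. This is backwards. The signature $(1,d-1)$ (more precisely $(1,3,m-4)$ before quotienting by translations) belongs to the \emph{Euclidean} polytope case; for Fuchsian polyhedra the paper proves (Theorem~\ref{thm:cov def pos} and Theorem~\ref{thm prin fuch}) that the Hessian of the covolume and the boundary area form are \emph{positive definite}, which is precisely the contrast the paper emphasizes ($\cosh\varphi\ge 1$ makes the Jacobian diagonally dominant, and $H_1^\Gamma(P)$ is a \emph{spherical}, not hyperbolic, polyhedron). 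So the infinitesimal rigidity input to Theorem~\ref{thm:FBr} is, if anything, easier than in the Euclidean case, and there is no rescaling direction to quotient out of the linearized problem.

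The second problem is the setup of your existence argument. You fix $\Gamma$ in advance as the uniformizing group of the conformal class of $m$ and then study the map from face heights to cone angles on a cone $\mathcal{C}$. But the lattice $\Gamma$ is part of the unknown in Theorem~\ref{thm:FBr} (it is determined a posteriori, up to conjugation, as the holonomy of the realizing polyhedron, and there is no reason it should uniformize the conformal class of $m$), and the dimension count fails: for a simple combinatorics the cone of support numbers has dimension $f=n/2+2-2g$, while there are $n$ cone angles, so ``heights $\mapsto$ angles'' cannot be a local diffeomorphism onto an open subset of $(2\pi,\infty)^n$. The deformation carried out in \cite{fillastre} instead varies the point of Teichm\"uller space together with the $n$ radial heights of the vertex orbits, and compares the full induced metric (not merely the angles) with the prescribed one; local injectivity comes from the infinitesimal rigidity above, and invariance of domain, properness and connectedness of the space of metrics conclude. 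With these two corrections your outline becomes a faithful description of the cited proof.
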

Let us call $P$ a \emph{convex $\Gamma$-polyhedron}, see Section~\ref{fuchsian} for precise definitions.
There are two major differences from the Euclidean case.
First, there is no analog of the double of a convex polygon in Theorem \ref{thm:FBr}.
Second, the set of convex $\Gamma$-polyhedra is not invariant under translations.

Denote by $H^\Gamma(P)$ the set of convex  $\Gamma$-polyhedra with faces parallel to those of
$P$ and with the same combinatorics as $P$. 
The set $H^\Gamma(P)$ is a $d$-dimensional convex cone.
The dimension $d$ is maximal when $P$ is simple.
In this case, $d=f$, where $f$ is the number of orbits of faces of $P$ under the action of $\Gamma$.
By the Euler formula, if $n$ is the number of vertices, then $f=n/2+2-2g$. 
The minimum possible dimension is $d=1$, attained when all polyhedra in $H^\Gamma(P)$ are homothetic of $P$.

The area of the boundary of $Q \in H^\Gamma(P)$ is positive
(the restriction of the Minkowski scalar product to $\partial Q$ is sign definite).
Let $H^\Gamma_1(P)$ be the set of $Q \in H^\Gamma(P)$  with $\a(\partial Q) = 1$.
The area 
extends to a quadratic form on $\R^d$. 
It turns out that the area is \emph{positive definite}, so that 
$H_1^\Gamma(P)$ is a convex \emph{spherical} polyhedron with
\[
0 \le \dim_\R(H_1^\Gamma(P)) \le n/2+1-2g~.
\]
See Section~\ref{fuchsian} for more details.

There is an injective linear map from $\R^d$ to $\mathbb{C}^k$
such that the image of $H^\Gamma(P)$ is contained in an unfolding chart of a Veech leaf $\mathcal{M}(S,\alpha, h)$.
This linear map preserves the area quadratic form.

\section{Spaces of polygons}\label{polygons}

General references for mixed volume and mixed area are
\cite{schneider,alex1,alex2}. See also \cite[Appendix A]{Izm10}.

Let $P$ be a convex $n$-gon.
Introduce the following notation:

$$\tilde{H}(P)=\{\mbox{convex polygons with edges parallel to }P\} $$
$$H(P)=\tilde{H}(P)/\mbox{translations}$$
$$H_1(P)=H(P)/\mbox{ homotheties}~. $$

Number the sides of $P$ arbitrarily by the numbers from $1$ to $n$ and use the same numbering for the sides of every $Q \in \tilde{H}(P)$.
Let $h_i$ be the support number of the $i$-th side of $Q$, that is the distance from the origin to the the line containing the side. The distance is calculated with a sign: it is positive if the line spanned by the side does not separate the polygon from the origin.
The support numbers determine a polygon in $\tilde{H}(P)$ uniquely, so that we can identify $Q$ with a vector $h \in \R^n$.

Let $\ell_i$ be the length of the $i$-th side of $Q$. If the adjacent sides have the indices $j$ and $k$, then we have (see Figure~\ref{fig sup edge}):

\begin{equation}\label{length edge}\ell_i(h)=\frac{1}{\sin \theta_{j}}\left( h_j + h_i \cos \theta_{j}\right)+\frac{1}{\sin \theta_{k}}\left( h_k + h_i \cos \theta_{k}\right)~.
\end{equation}

\begin{figure}[ht]
\begin{center}
\psfrag{p}{$p$}
\psfrag{0}{$0$}
\psfrag{ui}{$u_i$}
\psfrag{uj}{$u_j$}
\psfrag{hi}{$h_i$}
\psfrag{hj}{$h_j$}
\psfrag{vij}{$v_{ij}$}
\psfrag{ti}{$\theta_j$}
\psfrag{ptj}{$\pi-\theta_j$}
\psfrag{lij}{$\ell_{ij}$}
\psfrag{lik}{$\ell_{ik}$}
\includegraphics[width=0.4\linewidth]{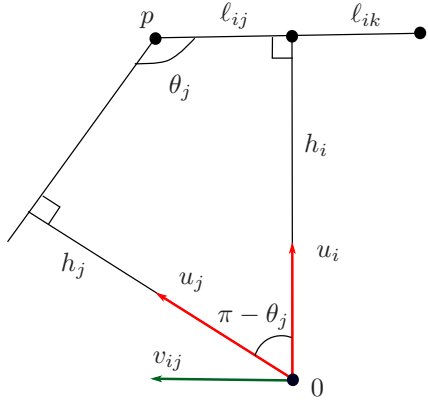}
\end{center}
\caption{Length $\ell_i$: 
$p=\ell_{ij}v_{ij}+h_iu_i$, $u_j=\cos (\pi-\theta_{j}) u_i + \sin (\pi- \theta_{j}) v_{ij}$.
As $h_j=\langle u_j,p\rangle$, we obtain
$\ell_{ij}=\frac{1}{\sin \theta_{j}}\left( h_j + h_i \cos \theta_{j}\right) $
and $\ell_i=\ell_{ij}+\ell_{ik}$. }\label{fig sup edge}
\end{figure}

The set $\tilde{H}(P) \subset \R^n$ is the solution set of a system of linear inequalities $\ell_i(h) > 0$, which is an open convex polyhedral cone.

By decomposing $Q$ into triangles with the common vertex $0$ we compute the area of $Q$:
$$\a(Q)=\frac{1}{2}\sum_{i=1}^n h_i \ell_i(h)~.$$
Since every $\ell_i$ is a linear function of $h \in \R^n$, the area is the restriction of a quadratic form on $\R^n$
(where $Q$ is identified with the vector of its support numbers $h \in \R^n$).
Denote this quadratic form $\a_P$. This form is determined by the angles between the sides of the polygon $P$, see the above formulas.
The \emph{mixed area} is the polarization of this quadratic form. That is, if $R \in \tilde{H}(P)$ is a polygon with the support numbers $k \in \R^n$, then we put

\begin{equation}\label{def mixed area pol}
\a_P(h,k)=\frac{1}{4}(\sum h_i\ell_i(k) +k_i\ell_i(h) )~.
\end{equation}

\begin{figure}[ht]
\begin{center}
\psfrag{e}{$\epsilon$}
\psfrag{a}{$\alpha$}
\psfrag{aa}{$\alpha'$}
\psfrag{l}{$\ell_i$}
\psfrag{h}{$h$}
\includegraphics[width=0.5\linewidth]{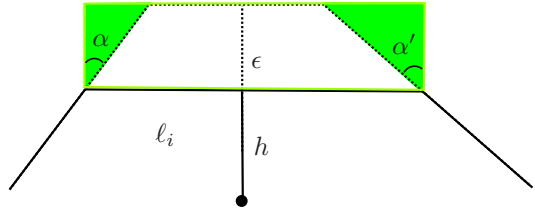}
\end{center}
\caption{$\partial_i \a_P=\ell_i$. The area 
 of the yellow rectangle minus the areas of the green triangles is $\epsilon \ell_i - \frac{1}{2}\epsilon^2(\tan \alpha + \tan \alpha')$.}
 \label{der aire}
\end{figure}

%
%

The following lemma shows that the symmetrization in the above formula is redundant.
\begin{lemma}
\label{lem:MixAreaFormula}
The mixed area can be computed by the formula
$$\a_P(h,k)=\frac{1}{2}\sum_{i=1}^n h_i \ell_i(k)~.   $$
\end{lemma}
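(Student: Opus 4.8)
The plan is to show that the bilinear form $(h,k) \mapsto \sum_i h_i \ell_i(k)$ is already symmetric, so that symmetrizing as in \eqref{def mixed area pol} changes nothing. Concretely, I want to prove the identity
\[
\sum_{i=1}^n h_i \ell_i(k) = \sum_{i=1}^n k_i \ell_i(h)
\]
for all $h, k \in \R^n$; once this is established, plugging it into \eqref{def mixed area pol} immediately gives $\a_P(h,k) = \frac{1}{4}\bigl(\sum_i h_i \ell_i(k) + \sum_i k_i \ell_i(h)\bigr) = \frac{1}{2}\sum_i h_i \ell_i(k)$, which is the asserted formula.

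First I would write out $\ell_i(k)$ using the explicit formula \eqref{length edge}. Each $\ell_i$ is a linear function of the support numbers, so there is a matrix $M = (M_{ij})$ with $\ell_i(h) = \sum_j M_{ij} h_j$, and the claimed identity is exactly the statement that $M$ is symmetric, i.e. $h^\top M k = k^\top M h$. From \eqref{length edge} one reads off that $M$ is tridiagonal in the cyclic sense: $M_{ii} = \cot\theta_j + \cot\theta_k$ (where $j,k$ index the sides adjacent to side $i$), while the only other nonzero entries are $M_{ij}$ and $M_{ik}$ for the two neighbors $j, k$ of $i$, with $M_{ij} = \frac{1}{\sin\theta_j}$. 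The diagonal is trivially symmetric, so the whole point is to check that the off-diagonal entry relating two adjacent sides $i$ and $j$ is computed consistently whether one regards it as the coefficient of $h_j$ in $\ell_i(h)$ or as the coefficient of $h_i$ in $\ell_j(h)$. In both expressions the relevant quantity is $\frac{1}{\sin\theta_j}$, where $\theta_j$ is the \emph{same} angle — the angle between sides $i$ and $j$ at their common vertex — appearing in the term contributed by that vertex; this is exactly how $\theta_j$ enters \eqref{length edge} from either side. Hence $M_{ij} = M_{ji}$, and $M$ is symmetric.

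Alternatively, and perhaps more cleanly, I would argue via the area: we already know from the discussion preceding the lemma that $\a(Q) = \frac{1}{2}\sum_i h_i \ell_i(h)$ is the restriction to $\tilde H(P)$ of a \emph{genuine} quadratic form $\a_P$ on $\R^n$, namely $\a_P(h) = \frac12 h^\top M h$ with $M$ as above; a quadratic form $h \mapsto \frac12 h^\top M h$ depends only on the symmetric part $\frac12(M + M^\top)$ of $M$, and its polarization is $h \mapsto \frac14 h^\top(M + M^\top)k$, which is precisely the right-hand side of \eqref{def mixed area pol}. So to prove the lemma it suffices to check that the specific matrix $M$ coming from \eqref{length edge} is already symmetric, reducing again to the off-diagonal computation above. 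I expect the main (and only real) obstacle to be bookkeeping: being careful about the cyclic indexing of sides and vertices of the $n$-gon and making sure that the angle $\theta_j$ attached to the common vertex of sides $i$ and $j$ is literally the same scalar in the two occurrences, so that no spurious asymmetry is introduced. A clean way to avoid index confusion is to index the \emph{vertices} of $P$, assign to each vertex $v$ its angle $\theta_v$ and its two incident sides, and observe that each vertex contributes to the form $\sum_i h_i \ell_i(h)$ a symmetric expression of the form $\frac{1}{\sin\theta_v}(h_a + h_b)^2$ up to the $\cos$-terms, making symmetry manifest; summing over all $n$ vertices then yields the result.
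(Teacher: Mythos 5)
Your argument is correct, but it reaches the conclusion by a different route than the paper. You prove directly that the coefficient matrix $M$ of the linear maps $\ell_i(h)=\sum_j M_{ij}h_j$ read off from \eqref{length edge} is symmetric — the off-diagonal entry attached to two adjacent sides is $1/\sin\theta_v$ with $\theta_v$ the angle at their common vertex, hence the same scalar from either side — so that $\sum_i h_i\ell_i(k)$ is already a symmetric bilinear form and the symmetrization in \eqref{def mixed area pol} is vacuous; your vertex-by-vertex regrouping of $\sum_i h_i\ell_i(k)$ makes this especially transparent. The paper instead invokes the geometric identity $\partial_i \a_P(h)=\ell_i(h)$ (Figure~\ref{der aire}) and writes $\a_P(h,k)=\frac12 D_h\a_P(k)=\frac12\sum_i h_i\partial_i\a_P(k)$, the symmetry of $M$ then being implicit in the equality of mixed second partials of $\a_P$. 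The two are essentially equivalent in content, but they buy different things: your computation is elementary and self-contained, needing only the explicit formula \eqref{length edge} and no appeal to differentiability of the area; the paper's ``derivative of area equals edge length'' argument is the one that scales, since it is exactly the mechanism (``derivative of volume equals face area'') reused later for the mixed volume formula $\v_P(h,k,p)=\frac13\sum_i h_i\,\a_{F_i}(k_{i\bullet},p_{i\bullet})$, where an explicit symmetric-matrix check would be far more painful. One small point of care in your write-up: the contribution of a vertex $v$ with incident sides $a,b$ to $\sum_i h_i\ell_i(h)$ is $\frac{1}{\sin\theta_v}\bigl(2h_ah_b+(h_a^2+h_b^2)\cos\theta_v\bigr)$ rather than literally $\frac{1}{\sin\theta_v}(h_a+h_b)^2$, but as you note it is symmetric in $a,b$ (and its bilinear version symmetric under $h\leftrightarrow k$), which is all that is needed.
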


\begin{proof}

The fundamental remark is that $\partial_i \a_P (h)=\ell_i(h)$, where $\partial_i$ is the partial differentiation with respect to $h_i$,
see Figure~\ref{der aire}.
Thus we have
\[
\a_P(h,k) = \frac12 D_h\a_P (k) = \frac12 \sum_i h_i \partial_i\a_P(k) = \frac12 \sum_i h_i\ell_i(k)
\]
See \cite{FBG} for another 
argument.
\end{proof}

The signature of the area quadratic form was computed by Bavard and Ghys in \cite{BG} using the induction on $n$.
Below we present an argument based on the theory of mixed areas.

For a point $x\in \R^2$, let $h^x \in \R^n$ be the vector with components
$h^x_i=\langle x,n_i\rangle$. This is the support vector of $\{x\}$, with $\{x\}$ viewed as
a degenerate convex polygon from $\tilde{H}(P)$.

What can we say about the signature of $\a_P$?

\begin{enumerate}
\item \textbf{There is at least one positive direction}. This is simply because for any
$h\in \tilde{H}(P)$, $\a_P(h)>0$.
\item \textbf{The dimension of its kernel is at least two}. Indeed, let $x\in \R^2$.
Then $Q+\{x\}$ is the translation of $Q$ by the vector $x$, and
$$0=\a_P(Q+\{x\})-\a_P(Q)=2\a_P(Q,\{x\})~,$$
 so for any $h\in \tilde{H}(P)$, 
$\a_P(h,h^x)=0$, and as $\tilde{H}(P)$ spans $\R^n$, $h_x\in \mathrm{Ker}(\a_P)$.
\end{enumerate}

In order to be able to say more, we need to consider the 
bigger set  $\mathcal{K}^2_0$ of all convex figures in the plane (i.e. compact convex sets with non empty interior).
This set is invariant under \emph{Minkowski addition}
$$K_1+K_2=\{x+y \mid x\in K_1, y\in K_2 \} $$
and (positive) homotheties: for $\lambda >0$,
$$\lambda K = \{ \lambda x \mid x\in K \}~. $$

Note that Minkowski addition and homotheties correspond to the linear operations with the support vectors.

\begin{theorem}[Minkowski inequality]
For any $K_1,K_2\in \mathcal{K}_0^2$,
$$\a_P(K_1,K_2)^2\geq \a_P(K_1)\a_P(K_2)~.$$
\end{theorem}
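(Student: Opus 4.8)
The plan is to derive this from the Brunn--Minkowski inequality in the plane. For arbitrary convex figures $K_1,K_2\in\mathcal{K}_0^2$ one should read $\a_P(K_1,K_2)$ as the ordinary mixed area, i.e.\ the polarization of the area functional with respect to Minkowski addition,
\[
\a_P(K_1,K_2)=\frac12\bigl(\a(K_1+K_2)-\a(K_1)-\a(K_2)\bigr);
\]
on $\tilde H(P)$ this agrees with the bilinear form of Lemma~\ref{lem:MixAreaFormula}, because Minkowski addition and homotheties act linearly on support vectors. The single identity to record is the expansion
\[
\a(K_1+tK_2)=\a(K_1)+2t\,\a_P(K_1,K_2)+t^2\,\a(K_2),\qquad t\ge 0,
\]
which for polygons is immediate from bilinearity of $\a_P$ on support vectors, and in general follows by approximating $K_1$ and $K_2$ by polygons whose edge normals lie in a common finite set of directions.

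Now put $f(t)=\sqrt{\a(K_1+tK_2)}$ for $t\ge 0$. Since $K_1+tK_2$ has nonempty interior, $\a(K_1+tK_2)>0$, so $f$ is smooth near $t=0$. The Brunn--Minkowski inequality in $\R^2$ says precisely that $f$ is concave, hence $f''(0)\le 0$. Let $g(t)=f(t)^2$ be the quadratic displayed above; from $g'=2ff'$, $g''=2(f')^2+2ff''$ and $g''=2\,\a(K_2)$ we get $ff''=\a(K_2)-(f')^2$, and evaluating at $t=0$ with $f(0)=\sqrt{\a(K_1)}$ and $f'(0)=\a_P(K_1,K_2)/\sqrt{\a(K_1)}$ yields
\[
0 \ge f(0)f''(0) = \a(K_2)-\frac{\a_P(K_1,K_2)^2}{\a(K_1)},
\]
which rearranges to the asserted inequality. (If the equality case is also wanted, it follows from that of Brunn--Minkowski: $f$ is affine iff $K_1$ and $K_2$ are homothetic.)

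Thus the real content is the Brunn--Minkowski inequality, and I expect this to be the only genuine difficulty: one may simply cite it (see \cite{schneider}) or, to keep the argument self-contained, prove the planar case by the standard induction over a grid of boxes. An alternative route avoiding Brunn--Minkowski is to observe that only the restriction of the inequality to $\tilde H(P)$ is used in the sequel, and to prove that directly by induction on the number of edges of $P$, in the spirit of Bavard--Ghys, reducing to a polygon with fewer sides. The delicate point there would be the bookkeeping of the inductive step --- checking that the reduction does not decrease $\a_P(K_1,K_2)^2$ nor increase $\a(K_1)\,\a(K_2)$ --- so I would favour the Brunn--Minkowski argument given above.
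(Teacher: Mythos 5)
Your proposal is correct. Note, however, that the paper offers no proof of this statement at all: it is quoted as the classical Minkowski inequality, with the standard references given at the head of Section~\ref{polygons}, and the authors' contribution is only to use it (together with its equality case) to read off the signature of $\a_P$. Your derivation from Brunn--Minkowski is the standard textbook route (it is exactly how Theorem~7.2.1 is obtained from Theorem~7.1.1 in \cite{schneider}), and the details check out: the identification of $\a_P(K_1,K_2)$ with the polarization $\tfrac12(\a(K_1+K_2)-\a(K_1)-\a(K_2))$ is consistent with the paper's remark that Minkowski addition acts linearly on support vectors; the quadratic expansion of $\a(K_1+tK_2)$ is the two-dimensional Steiner-type formula; concavity of $f(t)=\sqrt{\a(K_1+tK_2)}$ on $[0,\infty)$ follows from Brunn--Minkowski via $K_1+(\lambda t_0+(1-\lambda)t_1)K_2=\lambda(K_1+t_0K_2)+(1-\lambda)(K_1+t_1K_2)$; and the computation $f(0)f''(0)=\a(K_2)-\a_P(K_1,K_2)^2/\a(K_1)\le 0$ is correct. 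Your parenthetical on the equality case is slightly too quick --- concluding that $K_1$ and $K_2$ are homothetic requires the equality discussion in Brunn--Minkowski applied on the whole ray, not just $f''(0)=0$ --- but the equality case is stated as a separate theorem in the paper and is not part of the statement you were asked to prove. I would discard your proposed alternative of an induction on the number of edges: that is essentially the Bavard--Ghys computation of the signature, which the paper explicitly sets aside in favour of the mixed-area argument.
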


 From this inequality, we can deduce the following information about 
 $\a_P$.

\begin{enumerate}  \setcounter{enumi}{2}
\item \textbf{The positive index of $\a_P$ is one}. 
Indeed, Minkowski inequality says that, for  $h,k\in \tilde{H}(P)$,
$$\a_P(h,k)^2\geq \a_P(h)\a_P(k)~.$$
Suppose that the positive index of $\a_P$ is larger than one.
Then, for $h\in \tilde{H}(P)$, there exists
$v\in \R^n$ such that $\a_P(h,v) = 0$ and $\a_P(v,v)>0$. But as  $\tilde{H}(P)$ is open, there exists $\epsilon>0$ such that
$h+\epsilon v\in \tilde{H}(P)$. So $\a_P$ is positive definite on the plane spaned by 
$v$ and $h$. But then Cauchy--Schwarz inequality applies, and contradicts Minkowksi inequality.
\end{enumerate}

Also, we need the description of the equality case in Minkowski inequality.

\begin{theorem}[Equality case] 
Equality occurs in Minkowski inequality if and only if there is $x\in \R^2$ and $\lambda >0$ with $K_1=\{x\}+\lambda K_2$.
\end{theorem}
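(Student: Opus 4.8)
\noindent The ``if'' direction is a one-line computation, so I will concentrate on the converse.

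\smallskip
\noindent\textbf{Sufficiency.} If $K_1=\{x\}+\lambda K_2$ with $\lambda>0$, then on support vectors $h_{K_1}=h^x+\lambda h_{K_2}$, and since $h^x\in\ker\a_P$ (translations do not change mixed areas, as observed above) bilinearity gives $\a_P(K_1)=\lambda^2\a_P(K_2)$ and $\a_P(K_1,K_2)=\lambda\,\a_P(K_2)$, hence $\a_P(K_1,K_2)^2=\a_P(K_1)\a_P(K_2)$.

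\smallskip
\noindent\textbf{Necessity.} I would first settle the case in which $K_1$ and $K_2$ are convex polygons whose edges are parallel to those of a common polygon $P$, so that their support vectors $h:=h_{K_1}$ and $k:=h_{K_2}$ lie in $\R^n$; the general case is discussed below. A preliminary observation is that the mixed area is strictly positive: translating $K_1$ so that the origin is interior to it makes all $h_i>0$, whence $\a_P(h,k)=\frac{1}{2}\sum_i h_i\ell_i(k)>0$ by Lemma~\ref{lem:MixAreaFormula}, and mixed area is translation invariant. Assuming the equality $\a_P(h,k)^2=\a_P(h)\a_P(k)$, I would introduce
\[
v:=\a_P(h,k)\,k-\a_P(k)\,h .
\]
If $v=0$, then $h$ is a positive multiple of $k$ and $K_1$ is a homothet of $K_2$; so assume $v\neq0$. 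Bilinearity gives $\a_P(v,k)=0$ and, using the equality hypothesis, $\a_P(v,h)=\a_P(h,k)^2-\a_P(h)\a_P(k)=0$, and therefore $\a_P(v)=\a_P(h,k)\,\a_P(v,k)-\a_P(k)\,\a_P(v,h)=0$ as well. Thus $v$ is a nonzero $\a_P$-isotropic vector lying in the hyperplane $\{w:\a_P(w,k)=0\}$. Since $\a_P(k)>0$ and the positive index of $\a_P$ is $1$ --- precisely the consequence of the Minkowski inequality recorded above --- the form $\a_P$ is negative semidefinite on that hyperplane, and a negative semidefinite quadratic form vanishes on a vector only within its kernel (from $\a_P(sv+w)\le0$ for all real $s$ one extracts $\a_P(v,w)=0$). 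Hence $v\in\ker\a_P$. By Lemma~\ref{lem:MixAreaFormula}, $\ker\a_P=\{w\in\R^n:\ell_i(w)=0\text{ for all }i\}$, and this coincides with the two-dimensional space $\{h^x:x\in\R^2\}$ of support vectors of points: it obviously contains this space, and the edge-length map $w\mapsto(\ell_i(w))_i$ has rank $n-2$, since a convex polygon with given edge directions is determined by its edge lengths up to a two-parameter family of translations. Consequently $v=h^x$ for some $x\in\R^2$, i.e.\ $\a_P(h,k)\,k=\a_P(k)\,h+h^x$; as both coefficients are positive, this says $k$ is the support vector of $\lambda K_1+\{y\}$ with $\lambda=\a_P(k)/\a_P(h,k)>0$, that is, $K_1=\lambda^{-1}K_2+\{-\lambda^{-1}y\}$.

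\smallskip
\noindent\textbf{The main obstacle.} The algebra is routine; what must be handled with care are the two structural inputs. The minor one is pinning down $\ker\a_P$ exactly: the earlier argument only gave $\dim\ker\a_P\ge2$, and one has to verify that the edge-length map really has rank $n-2$, which is a dimension count. The substantial one arises if one wants the statement for all of $\mathcal{K}_0^2$, not only for polygons inscribed in a fixed fan of directions: then one runs the same argument with $\a_P$ replaced by the mixed-area quadratic form on the infinite-dimensional space of differences of support functions on $\Sph^1$, where the positive index is still $1$ by the Minkowski inequality and the kernel is the two-dimensional space of restrictions of linear functions; alternatively one reduces to the polygonal case by approximation. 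This is the classical approach, equivalent in the plane to the equality case of the Brunn--Minkowski inequality, and can also simply be quoted from \cite{schneider}.
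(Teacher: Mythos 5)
The paper never proves this statement: it is quoted as a classical fact (the equality case of the planar Minkowski inequality) and only its consequences for the signature of $\a_P$ are drawn, so there is no in-paper proof to match and your argument has to stand on its own. For the polygonal case it does. Your route --- set $v=\a_P(h,k)\,k-\a_P(k)\,h$, check that $v$ is isotropic and $\a_P$-orthogonal to $k$, use that the positive index is one to force $v\in\ker\a_P$, and identify $\ker\a_P$ with the translations via the edge-length map --- is correct, and it is pleasingly non-circular with respect to the paper's own logic: the paper deduces $\dim\ker\a_P=2$ \emph{from} the equality case (item 4), whereas you compute the kernel directly from Lemma~\ref{lem:MixAreaFormula} (a $w$ with $\ell_i(w)=0$ for all $i$ is a difference of two polygons of $\tilde H(P)$ with equal edge vectors, hence a translation) and then obtain the equality case as output. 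This is also genuinely different from the route the paper takes for the three-dimensional analogue, Theorem~\ref{thm: cas eglaite eucl}, which runs the quadratic-polynomial trick of Lemma~\ref{lem:algebre} to show the first area measures are proportional and then invokes Christoffel uniqueness (Theorem~\ref{them: christ}). Your Lorentzian linear algebra and the paper's $B^2\leq AC$ argument are two standard ways of extracting the same information from the inequality, and both bottom out in the same uniqueness input: your statement that edge lengths determine a polygon with fixed normal fan up to translation is exactly the discrete planar Christoffel uniqueness.

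One caveat. The theorem is stated for arbitrary $K_1,K_2\in\mathcal{K}_0^2$, while your proof covers polygons lying in a common $\tilde H(P)$ --- which is all the paper ever uses, since item 4 applies the equality case only to $h,k\in\tilde H(P)$. If you want the full statement, then of your three proposed completions the approximation one is the weak link: equality cases do not pass to limits, because the approximating pairs need not achieve equality. Either run the infinite-dimensional version honestly (the kernel of the mixed-area form on differences of support functions on the circle is the space of linear functions, i.e.\ the kernel of $h\mapsto h+h''$), or simply cite \cite{schneider}, as the paper implicitly does.
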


\begin{enumerate}  \setcounter{enumi}{3}
\item \textbf{The dimension of the kernel of $\a_P$ is 2}.
Indeed, the description of the equality case in Minkowski inequality says that 
 for $h,k\in  \tilde{H}(P)$,
$\a_P(h,k)= \a_P(h)\a_P(k)~$ if and only if there is $x\in \R^2$ and 
$\lambda >0$ with $h=h^x+\lambda k$.

Let $v$ be in the kernel. There exist $h,k\in \tilde{H}(P)$ such that
$v=h-k$. From $\a_P(h,v)=0$ it follows that $\a_P(h,k)=\a_P(h)$, and from
$\a_P(k,v)=0$, that $\a_P(h,k)=\a_P(k)$. The Minkowski inequality thus holds as equality,
and there is $x\in \R^2$ with $h=h^x+\lambda k$ and $v=h^x+(\lambda-1)k$. From
$\a_P(v,Q)=0$ we deduce that $\lambda=1$ so $v=h^{x}$.
\end{enumerate}

\begin{corollary}
The bilinear form $\a_P$ has signature $(1,2,n-3)$ on $\R^n$.
\end{corollary}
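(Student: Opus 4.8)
The plan is simply to assemble the four facts about $\a_P$ already established above. Write the signature of a symmetric bilinear form on $\R^n$ as a triple $(p,z,q)$, where $p$ is the positive index, $z=\dim\mathrm{Ker}(\a_P)$, and $q$ is the negative index, so that $p+z+q=n$. Observation (3) gives $p=1$ and observation (4) gives $z=2$, hence $q=n-1-2=n-3$, which is exactly the asserted signature $(1,2,n-3)$.

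Spelled out: (1) shows $p\ge 1$, since $\a_P$ is strictly positive on the ray through any $h\in\tilde{H}(P)$; (2) shows $z\ge 2$, since $x\mapsto h^x$ is a linear injection of $\R^2$ into $\mathrm{Ker}(\a_P)$ (injective because the outer normals of $P$ span $\R^2$). The substance is in (3) and (4): the Minkowski inequality rules out a positive-definite subspace of dimension $\ge 2$, because Cauchy--Schwarz on such a subspace would contradict Minkowski, forcing $p=1$; and the equality case of the Minkowski inequality shows every element of $\mathrm{Ker}(\a_P)$ has the form $h^x$, forcing $z\le 2$, hence $z=2$. Subtracting from $n$ gives $q=n-3$.

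There is essentially no obstacle remaining at this point: all of the analytic content sits in the Minkowski inequality and the description of its equality case, both of which are quoted, and in the derived items (1)--(4). The one thing worth keeping straight is the bookkeeping — that the positive direction exhibited in (1) is precisely the one whose doubling is excluded in (3), so that the positive index is genuinely $1$ rather than larger. Once this is noted, the corollary is immediate.
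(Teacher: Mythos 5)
Your proposal is correct and matches the paper's (implicit) argument exactly: the corollary is stated in the paper as an immediate consequence of the four enumerated observations, with the positive index $1$ from observation~(3), the kernel dimension $2$ from observations~(2) and~(4), and the negative index $n-3$ obtained by subtraction. Nothing more is needed.
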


\begin{corollary}\label{cor embed polygons}
Let $P$ be a convex polygon with angles $\alpha_1 /2,\ldots,\alpha_n /2$, and let $\alpha=(\alpha_1,\ldots,\alpha_n)$.
Then  $H_1(P)$ is a convex polyhedron in $\H^{n-3}$, which isometrically embeds into $\mathcal{M}(\Sph^2, \alpha)$.
\end{corollary}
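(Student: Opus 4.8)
The plan is to combine the signature computation for $\a_P$ with the unfolding-chart description of $\mathcal{M}(\Sph^2,\alpha)$, so that $H_1(P)$ appears as a totally geodesic subset of the complex hyperbolic orbifold. First I would make precise the claim that $H_1(P) \subset \H^{n-3}$. By the preceding corollary, $\a_P$ has signature $(1,2,n-3)$ on $\R^n$; passing to the quotient $\R^n/\mathrm{Ker}(\a_P)$, which is $(n-2)$-dimensional, the induced form has signature $(1,n-3)$. The cone $H(P) = \tilde H(P)/\text{translations}$ is exactly the image of $\tilde H(P)$ in this quotient (since the kernel is spanned by the support vectors $h^x$ of points, i.e.\ the translation directions), and it is a convex cone lying in the positive cone of the form because $\a_P(h) > 0$ for $h \in \tilde H(P)$. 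Intersecting with $\{\a_P = 1\}$ gives the standard model of a convex polyhedron in $\H^{n-3}$: the defining inequalities $\ell_i(h) > 0$ are linear in $h$ and hence cut out a convex polyhedron in hyperbolic space. Its dimension can be anything from $0$ to $n-3$ depending on how many of the edge-length constraints are active.

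Next I would construct the embedding into $\mathcal{M}(\Sph^2,\alpha)$. The metric realized by a polygon $Q \in \tilde H(P)$ is the double of $Q$; its cone points are the $n$ vertices, and at a vertex where the polygon angle is $\alpha_i/2$ the doubled cone angle is $\alpha_i$, so indeed the doubled metric lies in $\mathcal{M}(\Sph^2,\alpha)$. Translating $Q$ does not change the isometry type of the double, and scaling $Q$ scales the metric, so we get a well-defined map $H_1(P) \to \mathcal{M}(\Sph^2,\alpha)$. To see it lands in an unfolding chart and is an isometric embedding, I would use the linear map from $\R^d$ into $\C^{n-2}$ mentioned in the text (Case 1 of the positive-curvature discussion): cutting the double of $Q$ along a geodesic tree connecting the cone points yields a development in $\C$ whose edge vectors are determined linearly by the support numbers $h$, and this linear map is injective and preserves the area form — on one side the Euclidean area $\a_P(h)$, on the other the Hermitian area form on $\C^{n-2}$. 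Since both the source and target carry the hyperbolic metric induced by normalizing a signature-$(1,*)$ form to value $1$, a linear isometry of the quadratic forms descends to an isometric embedding of the associated hyperbolic spaces.

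I expect the main obstacle to be the verification that the unfolding map is genuinely \emph{linear} in the support numbers $h$ and that it \emph{preserves} the area form, rather than merely being compatible with it up to a constant or up to the choice of cutting tree. Concretely: one must check that the edge vectors of the developed polygon depend linearly (over $\R$) on $h$ — which follows because each edge length $\ell_i(h)$ is linear by \eqref{length edge} and the edge directions are fixed (they are determined by $P$), so each planar edge vector $\ell_i(h)\, u_i^{\perp}$ is linear in $h$ — and that the resulting point of $\C^{n-2}$ has Hermitian area equal to $\a_P(h)$, using that the Hermitian form computes, up to the standard factor, the signed area enclosed by the development, which for the double of $Q$ is $2\,\a(Q) = 2\,\a_P(h)$ (the factor being absorbed in the normalization). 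Once linearity and form-preservation are in hand, injectivity of the map on $H(P)$ follows because distinct translation-classes of polygons with the prescribed edge directions have distinct length data $(\ell_1,\dots,\ell_n)$, hence distinct developments; and the fact that the image is \emph{totally geodesic} in the complex hyperbolic metric is automatic, since a real-linear subspace on which a Hermitian form restricts to a real form of signature $(1,*)$ gives a totally geodesic real hyperbolic subspace of $\C\H^{n-3}$. Assembling these pieces yields the corollary.
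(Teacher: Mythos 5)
Your argument follows the paper's own proof of this corollary: double $Q$, cut to obtain an unfolding chart whose parameters are the edge vectors, and observe that in $\tilde H(P)$ the edge directions are fixed while the edge lengths are linear in the support numbers, so the resulting map into $\C^{n-2}$ is linear and carries $\a_P$ to the Hermitian area form (up to the harmless positive factor $2$ coming from doubling, which does not affect the induced hyperbolic metric). The only slip is your remark that $\dim H_1(P)$ could be anywhere between $0$ and $n-3$: for polygons $\tilde H(P)$ is an \emph{open} cone in $\R^n$ (it is cut out by the strict inequalities $\ell_i(h)>0$, satisfied by $P$ itself), so $H_1(P)$ always has dimension exactly $n-3$ — the possible drop in dimension occurs only in the polytope case of Section~\ref{area polytope}.
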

\begin{proof}
Let $Q\in\tilde{H}(P)$. Consider another copy of $Q$, glued to $Q$ along an edge.
This corresponds to the image of the metric on the sphere given by the doubling of $Q$ for an unfolding chart.
The parameters  are the complex numbers given by the edges seen as vectors.
In $\tilde{H}(P)$, the angles between the edges are fixed, and one can change the length of the edges,
which are linear functions of the support vectors.
\end{proof}

\begin{remark}\label{remark polygon}{\rm
Here is some more information about $H_1(P)$. First, $H_1(P)$ is a simple convex polyhedron. 
It always has finite volume; it is compact if and only if $P$ has no parallel edges. Actually  $H_1(P)$ is an orthoscheme
(roughly speaking, each facet meets non-orthogonally at most two other facets),
and its dihedral angles can be easily computed from the angles of $P$.
In particular, one can formulate under what conditions on $P$ the polyhedron $H_1(P)$ is a Coxeter orthoscheme.
Moreover, if $P'$ has the same angles as $P$, but their order differs by the transposition of two consecutive angles,
then $H_1(P)$ and $H_1(P')$ can be glued along 
isometric facets. Performing a gluing for all permutations of angles, one obtains a hyperbolic cone-manifold, which embeds isometrically into $\mathcal{M}(\Sph^2, \alpha)$, see \cite{BG,FBG} for more details.

These properties fail when $P$ is replaced by a convex polyhedron, see the next section.
}
\end{remark}

\begin{remark}\label{remark debin}{\rm
The space $H_1(P)$ naturally carries a hyperbolic metric due to the Minkowski inequality. 
But this inequality holds for all convex figures, not only for polygons. 
Thus, in a similar way it follows that the set of convex figures, up to translations and homotheties, is a convex subset of 
an infinite dimensional hyperbolic space. This is the subject of the last chapter of C. Debin PhD thesis \cite{debin}. 
See also Section~\ref{sobolev}.
 }
\end{remark}

\section{Boundary area of a convex polytope}\label{area polytope}

Let $P$ be a convex polytope in $\R^3$ with $m$ faces.
Introduce the following notation:

\begin{multline*}
\tilde{H}(P)=\{\text{convex polytopes with faces parallel to those of }P\\
\text{ and the same combinatorics as }P\}
\end{multline*}
\[H(P)=\tilde{H}(P)/\mbox{translations}\]
\[H_1(P)=H(P)/\mbox{homotheties}~. \]

The \emph{Gauss image} or the normal fan of $P$ is a tesselation of $\Sph^2$ by convex spherical polygons.
The vertices of the polygons are the outward unit normals to the faces of $P$.
There is an edge between two vertices if and only if the corresponding faces have a common edge.
Thus the polygons of the Gauss image of $P$ correspond to the vertices of $P$.
The angles of the Gauss image of a vertex are related to the face angles at that vertex, see Figure~\ref{fig gauss}.
We have $Q \in \tilde{H}(P)$ if and only if $Q$ has the same Gauss image as $P$.

\begin{figure}[ht]
\begin{center}
\psfrag{C}{$C$}
\psfrag{p}{$p$}
\psfrag{S}{$S$}
\psfrag{b}{$\beta_1$}
\psfrag{bb}{$\pi-\beta_1$}
\includegraphics[width=0.5\linewidth]{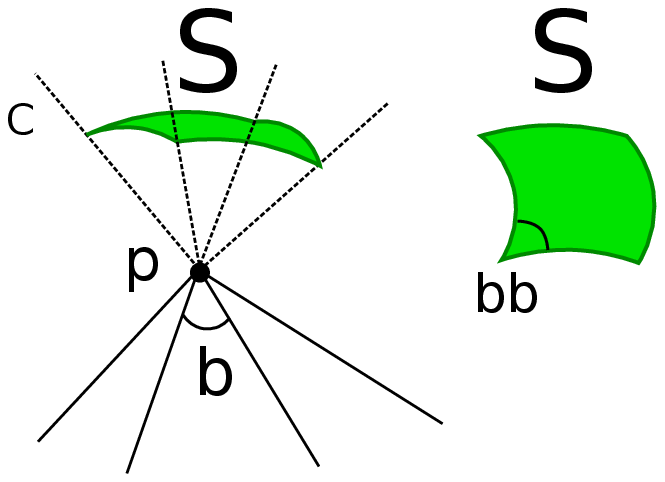}
\end{center}
\caption{The Gauss image of a vertex $p$ of a convex polyhedron $P$ is the intersection of a cone $C$,
whose edges are orthogonal to the faces of $P$ at $p$, with a unit sphere centered at $p$.
This gives a spherical convex polygon $S$, whose interior angles are the interior dihedral angles of $C$,
equal to $\pi$ minus the face angles of $P$. By the Gauss--Bonnet formula applied to the spherical polygon $S$,
the cone angle around $p$ (the sum of the face angles) is equal to $2\pi - $area$(S) < 2\pi$.}\label{fig gauss}
\end{figure}

Observe that
\begin{itemize}
\item a polytope $Q \in \tilde{H}(P)$ is determined by the (signed) distances of its faces from the origin, called the support numbers of $Q$;
\item polytopes with the same Gauss image have the same dihedral angles at the corresponding edges and the same face angles
at the corresponding vertices;
\item as a consequence, the induced metrics on the boundaries of polytopes with the same Gauss image
have the same singular curvatures at the corresponding vertices.
\end{itemize}

%
%

Identifying a polytope $Q \in \tilde{H}(P)$ with its support numbers, we have $\tilde{H}(P) \subset \R^m$.

Number the faces of $P$ arbitrarily by the numbers from $1$ to $m$ and use the same numbering for all $Q \in \tilde{H}(P)$.
Let $F_i(Q)$ be the $i$-th face of $Q$.
In the plane spanned by $F_i(Q)$ choose as the origin the orthogonal projection of the origin of $\R^3$.
This allows to define the support numbers $h_{ij}$ of $F_i(Q)$, where the index $j$ ranges over the faces adjacent to $F_i$.
The support numbers of a face can be computed from those of the polytope, see Figure~\ref{fig supface}.

\begin{figure}[ht]
\begin{center}
\psfrag{hi}{$h_i$}
\psfrag{h}{$h_j$}
\psfrag{hij}{$hij$}
\psfrag{phi}{$\varphi_{ij}$}
\includegraphics[width=0.5\linewidth]{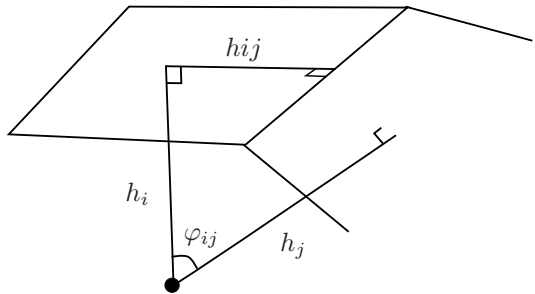}
\end{center}
\caption{From Figure~\ref{fig sup edge}, 
$h_{ij}=\frac{1}{\sin \varphi_{ij}}(h_j-h_i\cos\varphi_{ij})$.}\label{fig supface}
\end{figure}

%
%
%
%

Applying the formula on Figure~\ref{fig sup edge} twice,
we see that the edge lengths of a polytope from $\tilde{H}(P)$ are linear functions $\ell_{ij}(h)$ of the support numbers of the polytope.
A vector $h \in \R^m$ belongs to $\tilde{H}(P)$  if and only if $\ell_{ij}(h) > 0$ for all $i,j$.
Therefore $\tilde{H}(P)$ is a relatively open convex polyhedral cone in $\R^f$.
A polytope is called \emph{simple} if each vertex is incident to exactly three edges.
The cone $\tilde{H}(P)$  is full-dimensional (and then open in $\R^m$) if and only if the polytope $P$ is simple.

The volume of $Q \in \tilde{H}(P)$ can be computed by the formula
\[
\v(Q)=\frac{1}{3}\sum_{i=1}^m h_i(Q) \a(F_i(Q))~.
\]
In terms of the support numbers, this is a homogeneous polynomial of degree $3$ in $m$ variables:
\[
\v_P(h)=\frac{1}{3}\sum_{i=1}^m h_i \a_{F_i}(h_{i \bullet})~,
\]
where $h_{i \bullet}$ is the vector with components $h_{ij}$, $j$ ranging over the faces adjacent to $F_i$.
We define the mixed volume as a symmetric $3$-linear form which is the polarization of $\v_P$.
Similarly to Lemma \ref{lem:MixAreaFormula} one can prove that
\[
\v_P(h,k,p)=\frac{1}{3}\sum_{i=1}^m h_i \a_{F_i}(k_{i \bullet}, p_{i \bullet})~.
\]

\begin{theorem}[Alexandrov--Fenchel theorem]\label{thm:AF1 simple}
Let $P$ be a simple convex polytope. For any $p\in \tilde{H}(P)$ and any $h, k \in \R^m$ we have
$$\v_P(h,k,p)^2\geq \v_P(h,h,p)\v_P(k,k,p)~, $$
and equality occurs if and only if there exist $x\in \R^3$ and $\lambda\in \R$ with 
$h=h^x + \lambda k$.
\end{theorem}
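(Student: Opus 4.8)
The plan is to reduce the statement, as was done for the form $\a_P$ in Section~\ref{polygons}, to an assertion about the signature of a single symmetric bilinear form, and then to establish that signature. Fix $p\in\tilde{H}(P)$ and consider on $\R^m$ the bilinear form $B_p(h,k):=\v_P(h,k,p)$ together with the quadratic form $Q_p(h):=B_p(h,h)=\v_P(h,h,p)$. From the face decomposition $\v_P(h,k,p)=\frac13\sum_i h_i\,\a_{F_i}(k_{i\bullet},p_{i\bullet})$, the planar mixed-area formula of Lemma~\ref{lem:MixAreaFormula}, and the identity $h_{ij}=\frac{1}{\sin\varphi_{ij}}(h_j-h_i\cos\varphi_{ij})$ of Figure~\ref{fig supface}, one computes that the matrix of $6B_p$ is the symmetric matrix $A$ supported on the dual graph of $P$, with positive off-diagonal entries $A_{ij}=\ell_{ij}(p)/\sin\varphi_{ij}$ for adjacent faces $i\sim j$ and diagonal entries $A_{ii}=-\sum_{j\sim i}A_{ij}\cos\varphi_{ij}$. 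The theorem then reduces to the assertion that \emph{$B_p$ has positive index one and $\ker B_p$ equals the three-dimensional space $\{h^x:x\in\R^3\}$ of translation vectors}, where $h^x_i=\langle x,n_i\rangle$ and $n_i$ is the outward normal to $F_i$. Indeed $Q_p(p)=\v_P(p,p,p)=\v(P)>0$, and given this signature data the inequality and the description of the equality case follow by the same linear algebra as in Section~\ref{polygons} (applied with $h$ or $k$ in $\tilde{H}(P)$, which is the case needed in the sequel, and with ``$h=h^x+\lambda k$'' being exactly ``$h\in\R k+\ker B_p$'').

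Two ingredients are immediate. First, the dual graph of $P$ is connected and $A$ has nonnegative off-diagonal entries, so by Perron--Frobenius $A$ has a simple largest eigenvalue $\lambda_1$, strictly above all the others, with a positive eigenvector; and $\lambda_1\ge p^{\top}Ap/|p|^2=6\v(P)/|p|^2>0$. Second, translation invariance of mixed volumes yields the polynomial identity $\v_P(h^x+h,k,p)=\v_P(h,k,p)$, so every $h^x$ lies in $\ker B_p$, and the $h^x$ span a three-dimensional subspace because $P$ is full-dimensional. Hence the real content is (a) the bound $\lambda_2(A)\le 0$, i.e. that $B_p$ has \emph{at most} one positive direction, and (b) that $\ker B_p$ contains nothing beyond the translations.

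For (b) I would argue directly from the classical Brunn--Minkowski inequality in $\R^3$ and its equality case, with no appeal to Alexandrov--Fenchel. If $v\in\ker B_p$ then $B_p(p,v)=B_p(v,v)=0$, so for small $t$ (keeping $p+tv$ in the open cone $\tilde{H}(P)$) one has $\v_P(p+tv)=\v(P)+t^3\v_P(v)$; since $t\mapsto\v_P(p+tv)^{1/3}$ is concave by Brunn--Minkowski, this forces $\v_P(v)=0$, hence $\v_P(p+tv)\equiv\v(P)$; equality in Brunn--Minkowski along the segment then makes the polytopes $p+tv$ pairwise homothetic and, having equal volume, pairwise translates, so $v\in\{h^x\}$. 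For (a) I would run Alexandrov's induction on the number $m$ of faces. The base case is the tetrahedron: $\dim\tilde{H}(P)=4$, the kernel already has dimension three, so $B_p$ has rank one and signature exactly $(1,0,3)$. The inductive step degenerates $P$, inside the class of simple polytopes, to one with $m-1$ faces by collapsing a triangular face to a vertex (the reverse of a vertex truncation), and compares the signature of $Q_p$ across this boundary stratum of $\tilde{H}(P)$; the planar input enters through the face decomposition and the already established signature of the forms $\a_{F_i}$. Alternatively, step (a) \emph{is} the Alexandrov--Fenchel inequality for polytopes with a common normal fan, and one may simply cite \cite{schneider,alex1}.

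The main obstacle is step (a). The bound $\lambda_2(A)\le 0$ is equivalent to the Alexandrov--Fenchel inequality itself, so no genuine shortcut is available. The delicate point in Alexandrov's argument is that $Q_p$ lives on the fixed space $\R^m$ while the deformation changes the combinatorial type of $P$: as $p$ approaches $\partial\tilde{H}(P)$ one row and column of $A$ tend to zero, the limiting form is $6B_{p'}$ for the simpler polytope $P'$ plus an extra null direction, and the crux is to show that when $p$ re-enters the interior of $\tilde{H}(P)$ this direction splits off \emph{negative}, not positive, which is precisely where the planar Minkowski inequality for the collapsing face has to be brought to bear. Everything else is either formal (the linear algebra, the polynomial identity) or classical (Brunn--Minkowski with equality, Perron--Frobenius).
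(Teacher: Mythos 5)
The paper does not actually prove Theorem~\ref{thm:AF1 simple}: it is quoted as a classical result, with \cite{schneider,alex1,alex2} given as general references at the head of Section~\ref{polygons}. So your proposal has to stand on its own. Two of its three pieces do. The reduction to a signature statement is correct, and your matrix $A$ for $6B_p$ (off-diagonal entries $\ell_{ij}(p)/\sin\varphi_{ij}$, diagonal $-\sum_{j\sim i}A_{ij}\cos\varphi_{ij}$) is the right specialization of $\v_P(h,k,p)=\frac13\sum_i h_i\,\a_{F_i}(k_{i\bullet},p_{i\bullet})$ via Lemma~\ref{lem:MixAreaFormula} and Figure~\ref{fig supface}. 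Your step (b) is a complete argument: $v\in\ker B_p$ gives $\v_P(p+tv)=\v(P)+t^3\v_P(v)$, concavity of $t\mapsto\v_P(p+tv)^{1/3}$ kills the cubic term, and the equality case of Brunn--Minkowski together with equality of volumes forces $p+tv$ to be a translate of $p-tv$, i.e.\ $v=h^x$. That is genuine content the paper does not supply. Your parenthetical restriction to $h$ or $k\in\tilde{H}(P)$ is also not merely a convenience: as printed, the inequality for arbitrary $h,k\in\R^m$ is false once $m\geq 6$ (take $h,k$ a $B_p$-orthogonal pair in the negative-definite part of the form; the left-hand side vanishes while the right-hand side is positive), so the hypothesis must be that at least one of $h,k$ lies in the closure of $\tilde{H}(P)$, or at least satisfies $\v_P(k,k,p)\geq 0$ --- which covers every use made of the theorem in this paper.

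The gap is step (a): that $B_p$ has positive index one is described, not proved. You say so yourself, and you are right that it is equivalent to the Alexandrov--Fenchel inequality for strongly isomorphic simple polytopes, so falling back on \cite{schneider,alex1} is legitimate and is in effect what the paper does for the whole theorem. But the inductive step you sketch --- collapsing a triangular face and arguing that the freed direction ``splits off negative'' --- is exactly the hard point of Alexandrov's proof, and a one-sentence description of where the planar Minkowski inequality ``has to be brought to bear'' is not an argument. Note that your step (b) does more work for (a) than you credit it with: since the kernel has constant dimension $3$ on the connected open cone $\tilde{H}(P)$, no eigenvalue of $B_p$ crosses zero as $p$ varies, so the signature is constant on $\tilde{H}(P)$ and only one $p$ per normal fan needs to be examined; the residual difficulty is connecting distinct normal fans, which is precisely where your degeneration would have to be made rigorous. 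As it stands the proposal is an accurate road map with its one load-bearing pillar outsourced to the literature.
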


The area of the boundary of $Q \in \tilde{H}(P)$ is the restriction to $\tilde{H}(P) \subset \R^m$ of a quadratic form on $\R^m$
\[
\ap_P(h)=\sum_{i=1}^m \a_{F_i}(h_{i \bullet}).
\]
Its polarization is a symmetric bilinear form
\[
\ap_P(h,k)=\sum_{i=1}^m \a_{F_i}(h_{i\bullet}, k_{i\bullet})~.
\]
Denote $\mathbf{1} = (1, \ldots, 1) \in \R^m$.
From the definitions it follows that
\[
\ap_P(h)=3 \v_P(\mathbf{1},h,h),\, \ap_P(h,k)= 3 \v_P(\mathbf{1},h,k)~.
\]

Suppose that $P$ is simple and circumscribed, i.e. the vector of the support numbers of $P$ is a multiple of $\mathbf{1}$.
Then Theorem~\ref{thm:AF1 simple} implies the following.

\begin{corollary}
Let $P$ be a simple circumscribed convex polytope. Then for $h,k\in \tilde{H}(P)$,
$$\ap_P(h,k)^2\geq \ap_P(h)\ap_P(k) $$
and equality occurs if and only there exists 
$x\in \R^ 3$ and $\lambda\in \R$ with 
$h=h^x+\lambda k$.
\end{corollary}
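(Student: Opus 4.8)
The plan is to derive this directly from the Alexandrov--Fenchel inequality (Theorem~\ref{thm:AF1 simple}) by taking the auxiliary polytope $p$ to be $\mathbf{1}$. The bridge is provided by the identities recorded just above the statement,
\[
\ap_P(h) = 3\,\v_P(\mathbf{1},h,h), \qquad \ap_P(h,k) = 3\,\v_P(\mathbf{1},h,k),
\]
together with the symmetry of the mixed volume, which lets us rewrite $\v_P(\mathbf{1},h,k) = \v_P(h,k,\mathbf{1})$.

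First I would check that $\mathbf{1} \in \tilde{H}(P)$, so that $\mathbf{1}$ is an admissible value for the vector $p$ in Theorem~\ref{thm:AF1 simple}. This is precisely where the hypothesis that $P$ is circumscribed is used: placing the origin at the center of the inscribed sphere, the support number of every face of $P$ equals the inradius $c>0$ (each face lies at distance $c$ from the origin and does not separate $P$ from it), so the support vector of $P$ is $c\mathbf{1}$; since $\tilde{H}(P)$ is a convex cone, $\mathbf{1} = \tfrac1c (c\mathbf{1})$ also lies in $\tilde{H}(P)$. Simplicity of $P$ guarantees that $\tilde{H}(P)$ is full-dimensional, so Theorem~\ref{thm:AF1 simple} applies verbatim with $p = \mathbf{1}$.

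Then, for $h,k \in \tilde{H}(P)$, Theorem~\ref{thm:AF1 simple} with $p = \mathbf{1}$ gives
\[
\v_P(h,k,\mathbf{1})^2 \ge \v_P(h,h,\mathbf{1})\,\v_P(k,k,\mathbf{1}),
\]
and multiplying both sides by $9$ and substituting the displayed identities yields exactly $\ap_P(h,k)^2 \ge \ap_P(h)\,\ap_P(k)$. The equality case transfers word for word: equality in Alexandrov--Fenchel with $p = \mathbf{1}$ holds iff $h = h^x + \lambda k$ for some $x \in \R^3$ and $\lambda \in \R$, and by the same identities this is the equality case for $\ap_P$.

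I do not expect a genuine obstacle: once the identities $\ap_P = 3\,\v_P(\mathbf{1},\cdot,\cdot)$ are in hand, the corollary is a formal specialization of the Alexandrov--Fenchel inequality, and the only subtle point is noticing that the ``circumscribed'' assumption is exactly what makes $\mathbf{1}$ a legitimate choice of $p$ (lying in $\tilde{H}(P)$); without it, $\mathbf{1}$ need not belong to $\tilde{H}(P)$, so the hypothesis is essential rather than cosmetic.
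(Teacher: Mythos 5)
Your proof is correct and is exactly the argument the paper intends: the corollary is stated immediately after the identities $\ap_P(h)=3\v_P(\mathbf{1},h,h)$ and $\ap_P(h,k)=3\v_P(\mathbf{1},h,k)$ with the one-line justification that Theorem~\ref{thm:AF1 simple} (applied with $p=\mathbf{1}$, admissible precisely because ``circumscribed'' means the support vector of $P$ is a positive multiple of $\mathbf{1}$ and $\tilde H(P)$ is a cone) implies it. You have simply filled in the details the paper leaves implicit, including the correct observation of where each hypothesis is used.
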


Exactly as in the polygon case in Section~\ref{polygons}, it follows that $\ap_P$ has signature $(1,3,m-4)$. 
To remove the assumptions that $P$ is circumscribed, we will need a more general results on convex bodies.
More precisely,  Theorem~\ref{thm: cas eglaite eucl} in the next section implies
the following.

\begin{theorem}\label{thm poly r3}
Let $P$ be a convex polytope. Then for $h,k\in \tilde{H}(P)$,
$$\ap_P(h,k)^2\geq \ap_P(h)\ap_P(k) $$
and equality occurs if and only if there exist 
$x\in \R^ 3$ and $\lambda\in \R$ with 
$h=h^x+\lambda k$.
\end{theorem}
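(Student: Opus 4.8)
The plan is to deduce Theorem~\ref{thm poly r3} from the Alexandrov--Fenchel inequality for general convex bodies, via an approximation argument that is packaged in Theorem~\ref{thm: cas eglaite eucl} of the next section. The reason the circumscribed hypothesis in the preceding corollary is a nuisance is that the Alexandrov--Fenchel inequality $\v_P(h,k,p)^2 \geq \v_P(h,h,p)\v_P(k,k,p)$ only becomes the Minkowski-type inequality $\ap_P(h,k)^2 \ge \ap_P(h)\ap_P(k)$ when the ``auxiliary'' body $p$ is taken to be $\mathbf{1}$, and the equality-case description in Theorem~\ref{thm:AF1 simple} was established only for $p \in \tilde H(P)$, which forces $\mathbf{1} \in \tilde H(P)$, i.e. $P$ circumscribed. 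To remove this, I would recall that the general Alexandrov--Fenchel inequality for arbitrary convex bodies $K_1,\dots,K_n$ in $\R^n$ states $V(K_1,K_2,K_3,\dots,K_n)^2 \ge V(K_1,K_1,K_3,\dots,K_n)\,V(K_2,K_2,K_3,\dots,K_n)$ with no simplicity or full-dimensionality assumption, and in $\R^3$ with the third body a fixed ball $B$ this reads exactly $\ap(K_1,K_2)^2 \ge \ap(K_1)\ap(K_2)$ where $\ap(K) = 3V(B,K,K)$ is (a constant multiple of) the surface area.

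First I would fix $P$, simple or not, and observe that every $Q \in \tilde H(P)$, identified with its support vector $h$, is a genuine convex body, so the general inequality of the previous paragraph applies verbatim to $K_1 = Q_h$, $K_2 = Q_k$; since $\ap_P$ is the restriction to $\tilde H(P)$ of the surface-area functional expressed in support coordinates, this gives the inequality $\ap_P(h,k)^2 \ge \ap_P(h)\ap_P(k)$ for all $h,k \in \tilde H(P)$ immediately. Second, for the equality case I would invoke the known characterization of equality in the Alexandrov--Fenchel inequality in the special ``quermassintegral'' situation $V(B,K_1,K_1) = V(B,K_1,K_2)^2 / V(B,K_2,K_2)$ — this is the classical equality case of the Minkowski inequality for convex bodies in $\R^3$ (not the general Alexandrov--Fenchel equality case, which is still open), and it says equality holds iff $K_1$ and $K_2$ are homothetic. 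Translating ``homothetic'' back into support-number language: $K_1 = \{x\} + \lambda K_2$ for some $x \in \R^3$ and $\lambda \ge 0$ means precisely $h = h^x + \lambda k$ with $h^x_i = \langle x, n_i\rangle$. One must also allow $\lambda$ to be an arbitrary real (the statement says $\lambda \in \R$): this comes from the fact that $\ap_P$ is a quadratic form on all of $\R^m$ and its restriction to the subspace spanned by $h$ and $k$ — which both lie in the open cone $\tilde H(P)$ — has Lorentzian signature, so the Cauchy--Schwarz-type equality on that $2$-plane forces linear dependence modulo the kernel, and the kernel is exactly $\{h^x : x \in \R^3\}$ by the same translation-invariance argument used for polygons in Section~\ref{polygons}.

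Concretely, the steps in order: (1) state the general Alexandrov--Fenchel / Minkowski inequality for convex bodies in $\R^3$ and its sharp equality case (homothety), citing \cite{schneider}; (2) note that $H(P) \subset \{$convex bodies$\}$ and that under the support-number identification the surface area restricts to $\ap_P$ and Minkowski addition restricts to vector addition, so the inequality transfers to $\ap_P(h,k)^2 \ge \ap_P(h)\ap_P(k)$; (3) for equality, use the homothety characterization to get $Q_h = \{x\}+\lambda Q_k$, rewrite as $h = h^x + \lambda k$, and handle the sign/degenerate-$\lambda$ subtleties by the signature argument on the $2$-plane $\langle h,k\rangle$ together with $\mathrm{Ker}(\ap_P) \supseteq \{h^x\}$; (4) conclude. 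The main obstacle is genuinely step (1)--(3) bookkeeping rather than a deep idea: one must be careful that the equality case being used is the \emph{Minkowski} equality case (homothety of the two bodies), which is classical and sharp, and not to accidentally claim the general Alexandrov--Fenchel equality case; and one must check that degenerate bodies $Q_h$ (when $P$ is not simple, $\tilde H(P)$ is only relatively open, so some support vectors give lower-dimensional $Q$) either do not arise in the relatively-open cone or are handled by a continuity/limiting argument — this is precisely what Theorem~\ref{thm: cas eglaite eucl} of the next section is designed to supply, so in the write-up I would simply cite it for the equality case and give the short transfer argument for the inequality itself.
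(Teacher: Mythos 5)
Your proposal is correct and follows essentially the same route as the paper: the paper also derives Theorem~\ref{thm poly r3} by specializing Theorem~\ref{thm: cas eglaite eucl} (the Favard--Kubota equality case, itself obtained from Alexandrov--Fenchel plus the algebraic lemma and Christoffel uniqueness) to polytopes in $\tilde{H}(P)$, with the transfer between convex-body language and support-number coordinates left implicit. Your extra care about degenerate bodies and the sign of $\lambda$ is harmless but unnecessary, since every $h\in\tilde{H}(P)$ is a full-dimensional polytope and $\ell_{ij}(h)=\lambda\ell_{ij}(k)>0$ already forces $\lambda>0$.
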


\begin{corollary}
Let $P$ be a convex polytope whose induced metric on the boundary belongs to  $\mathcal{M}(\Sph^2, \alpha)$. Then $H_1(P)$ is 
a convex hyperbolic polyhedron of dimension $\leq m-4$, which embeds isometrically into $\mathcal{M}(\Sph^2, \alpha)$.  
\end{corollary}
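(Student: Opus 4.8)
The plan is to mirror the polygon argument of Section~\ref{polygons}, using Theorem~\ref{thm poly r3} in place of the Minkowski inequality and $\R^3$ in place of $\R^2$. First I would pass to the linear span $V \subseteq \R^m$ of the relatively open convex cone $\tilde{H}(P)$, noting that $V$ contains the three-dimensional subspace $T = \{h^x : x \in \R^3\}$ of support vectors of points, because $\tilde{H}(P)$ is invariant under translations. Then I would run the four steps from Section~\ref{polygons} verbatim: \emph{(1)} $\ap_P|_V$ has a positive direction, since $\ap_P(h) = \a(\partial Q) > 0$ for $h \in \tilde{H}(P)$; \emph{(2)} $T \subseteq \ker(\ap_P|_V)$, obtained from $0 = \ap_P(h + h^x) - \ap_P(h) = 2\ap_P(h, h^x)$ exactly as in Section~\ref{polygons}, together with the fact that $\tilde{H}(P)$ spans $V$; \emph{(3)} the positive index equals $1$, because a second positive direction $v$ with $\ap_P(h,v) = 0$ would make $\ap_P$ positive definite on $\mathrm{span}(h, h + \epsilon v) \subseteq \tilde{H}(P)$, and the Cauchy--Schwarz inequality there would contradict Theorem~\ref{thm poly r3}; \emph{(4)} $\ker(\ap_P|_V) = T$, by writing a kernel vector as $v = h - k$ with $h, k \in \tilde{H}(P)$, deducing equality in Theorem~\ref{thm poly r3} and hence $h = h^x + \lambda k$ for some $x \in \R^3$ and $\lambda \in \R$, and finally forcing $\lambda = 1$ from $0 = \ap_P(v, k) = (\lambda - 1)\ap_P(k)$.

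From (1)--(4) I would conclude that $\ap_P|_V$ has signature $(1, 3, \dim V - 4)$, so in particular $\dim V \geq 4$; that it descends to a nondegenerate Lorentzian form of signature $(1, \dim V - 4)$ on $V/T$, which is the ambient space of the cone $H(P) = \tilde{H}(P)/T$; and that the image of $\tilde{H}(P)$ lies in the positive cone of this form. The level set $\{\ap_P = 1\}$ inside that positive cone is a copy of $\H^{\dim V - 4}$, and intersecting it with $H(P)$ exhibits $H_1(P)$ as a convex polyhedron in $\H^{\dim V - 4}$, with $\dim V - 4 \leq m - 4$ and equality exactly when $P$ is simple.

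For the isometric embedding I would use Thurston's unfolding in the concrete form available here. Fix a spanning tree of the $1$-skeleton of $P$; for each $Q \in \tilde{H}(P)$, which has the same combinatorics as $P$, cut $\partial Q$ along the corresponding tree and unfold isometrically into the plane to get a polygon with $2(n-1)$ edges, i.e.\ an unfolding chart with values in $\C^{n-2}$. Since the faces of $Q$ are parallel to those of $P$ and all dihedral angles are fixed (same Gauss image), the edge directions of this development are independent of $Q$, while the edge lengths are the linear functions $\ell_{ij}(h)$; hence the map sending $h$ to the vector of development edges is linear from $V$ to $\C^{n-2}$, translation invariant, and area preserving, the area of $\partial Q$ being the value of Thurston's Hermitian form in the chart. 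Injectivity modulo translations follows from the uniqueness in Theorem~\ref{thm:Alex}: two elements of $\tilde{H}(P)$ with the same development have isometric boundaries, hence differ by a translation. Thus the hyperbolic metric $\ap_P$ induces on $H_1(P)$ is the restriction of the complex hyperbolic metric of $\mathcal{M}(\Sph^2, \alpha)$, and $H_1(P)$ embeds isometrically, exactly as in Corollary~\ref{cor embed polygons}.

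The routine part is the signature computation, which reproduces Section~\ref{polygons} line for line once Theorem~\ref{thm poly r3} is invoked. The real obstacle is twofold. First, one must carry every step out inside the span $V$ rather than $\R^m$, since $\tilde{H}(P)$ is full-dimensional only when $P$ is simple; this needs that $\tilde{H}(P)$ is relatively open in $V$, spans $V$, and contains $T$, all of which follow from the description of $\tilde{H}(P)$ as a relatively open convex cone and its translation invariance. Second, the unfolding map requires care: one must choose the cutting tree once and for all on the $1$-skeleton, check that the development then depends linearly on $h$ with directions fixed by the (common) dihedral angles, and verify both area preservation and injectivity, the latter resting on Alexandrov's rigidity (Theorem~\ref{thm:Alex}).
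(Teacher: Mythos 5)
Your argument is correct and follows the same route the paper intends: the four-step signature computation of Section~\ref{polygons} transplanted to $\R^m$ (restricted to the span of $\tilde{H}(P)$ to handle non-simple $P$), with Theorem~\ref{thm poly r3} playing the role of the Minkowski inequality, giving signature $(1,3,\cdot)$ and hence the hyperbolic structure, followed by the unfolding-chart embedding exactly as in Corollary~\ref{cor embed polygons}. Your explicit treatment of the linear span $V$ and of injectivity via Alexandrov's rigidity only makes precise what the paper leaves implicit.
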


\begin{remark}{\rm
For a convex polytope $P$, there may be a convex polytope $P'$ with faces parallel to those of $P$, but with different combinatorics.
Sometimes, as in the case of polygons, 
$H_1(P)$ can be isometrically glued to $H_1(P')$ along a codimension $1$ face.
But, conversely to the polygon case, they would embed into different spaces of flat metrics.
This is because the boundary metrics of $P$ and $P'$ usually have different cone angles, see Figure~\ref{somme angle}.
Note also that if $Q\in \tilde{H}(P')$, then the support vector of $P+Q$ is not $h_P+h_Q$, see \cite{FI} for more details.
}\end{remark}

\begin{figure}[ht]
\begin{center}
\psfrag{a}{$1+2$}
\psfrag{b}{$7+8$}
\psfrag{c}{$1+2$}
\psfrag{d}{$7+8$}
\psfrag{e}{$3+4$}
\psfrag{f}{$5+6$}
\psfrag{x}{$1$}
\psfrag{y}{$2$}
\psfrag{z}{$3$}
\psfrag{t}{$4$} 
\psfrag{u}{$5$}
\psfrag{v}{$6$}
\psfrag{g}{$7$}
\psfrag{h}{$8$}
\psfrag{p}{$3+4$}
\psfrag{q}{$5+6$}
\includegraphics[width=0.6\linewidth]{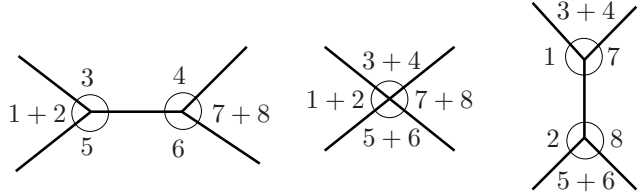}
\end{center}
\caption{Polyhedra with parallel faces but different combinatorics have different cone angles.}\label{somme angle}
\end{figure}


\begin{example}
{\rm Let $P$ be a cylinder with a convex polygon $P_0$ as the base. Then $H_1(P)$ is a 
hyperbolic pyramid with an ideal apex and with $H_1(P_0)$ as the base.
}\end{example}

See \cite{FI} for more details on the above examples, as well as for other examples.

\section{Boundary area of convex bodies}

Let $\mathcal{K}_0^3$ be space of convex bodies of $\R^3$, i.e. convex compact sets with non-empty interior.
The volume $\v(K)$ of $K\in \mathcal{K}_0^3$
if the Lebesgue measure of $K$. The \emph{mixed volume} can be defined as
\begin{multline}
\label{defvom}
6\v(K_1,K_2,K_3)=
\v(K_1+K_2+K_3) + \v(K_1)+\v(K_2)+\v(K_3)\\
-\v(K_1+K_2) - \v(K_2+K_3)-\v(K_1+K_3)~.
\end{multline}
If $K_1, K_2$, and $K_3$ are convex polytopes with parallel faces and the same combinatorics,
then above formula corresponds to the polarization of a homogeneous polynomial $\v_P$.
General $K_1,K_2,K_3$ can be approximated by sequences of simple convex polytopes with parallel faces and the same combinatorics.
By continuity, Theorem~\ref{thm:AF1 simple} implies the following.

\begin{theorem}[Alexandrov--Fenchel inequality]\label{AF bodies euc}
For $K_1,K_3,K_3\in \mathcal{K}^3_0$,
$$\v(K_1,K_2,K_3)^2\geq \v(K_1,K_1,K_3)\v(K_2,K_2,K_3)~. $$
\end{theorem}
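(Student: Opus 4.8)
The plan is to reduce the Alexandrov--Fenchel inequality for general convex bodies to the polyhedral case already available in Theorem~\ref{thm:AF1 simple}, via an approximation argument together with continuity of mixed volumes. First I would recall that every convex body $K \in \mathcal{K}_0^3$ is the Hausdorff limit of a sequence of convex polytopes; the real work is to arrange that, given three bodies $K_1, K_2, K_3$, we can choose approximating sequences $K_1^{(\nu)}, K_2^{(\nu)}, K_3^{(\nu)}$ that are \emph{simultaneously} simple polytopes, have pairwise parallel faces, and share the same combinatorial type, so that each triple falls under the hypotheses of Theorem~\ref{thm:AF1 simple}. One clean way to do this: fix a ``generic'' reference simple polytope $P_0$ (e.g. obtained by slightly perturbing the normals so that no two of the chosen normal directions are antipodal and the polytope is simple), and approximate each $K_i$ from outside by the polytope $K_i^{(\nu)}$ defined as the intersection of the supporting half-spaces of $K_i$ whose normals lie in a finer and finer finite subset $N_\nu$ of directions that includes the normals of $P_0$. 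A small further generic perturbation of the support numbers keeps each $K_i^{(\nu)}$ simple with normal fan refining that of $P_0$; after passing to a common refinement of the three fans one gets polytopes with parallel faces and identical combinatorics. As $\nu \to \infty$, $K_i^{(\nu)} \to K_i$ in the Hausdorff metric.

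Next I would invoke the continuity of the mixed volume with respect to the Hausdorff metric — this is the standard fact, and it is implicit in the definition~\eqref{defvom}, since $\v$ is continuous on $\mathcal{K}_0^3$ and the mixed volume is a fixed linear combination of volumes of Minkowski sums, and Minkowski addition is itself Hausdorff-continuous. Therefore
\[
\v(K_i^{(\nu)}, K_j^{(\nu)}, K_l^{(\nu)}) \longrightarrow \v(K_i, K_j, K_l)
\]
for all index choices. Applying Theorem~\ref{thm:AF1 simple} to each triple $(h,k,p) = (K_1^{(\nu)}, K_2^{(\nu)}, K_3^{(\nu)})$ in the support-number coordinates of the common refining fan gives
\[
\v(K_1^{(\nu)}, K_2^{(\nu)}, K_3^{(\nu)})^2 \geq \v(K_1^{(\nu)}, K_1^{(\nu)}, K_3^{(\nu)})\,\v(K_2^{(\nu)}, K_2^{(\nu)}, K_3^{(\nu)}),
\]
and passing to the limit yields the desired inequality for $K_1, K_2, K_3$. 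Since every $\mathcal{K}_0^3$ body occurs as such a limit, the theorem follows in full generality; the remark in the excerpt that ``general $K_1,K_2,K_3$ can be approximated by sequences of simple convex polytopes with parallel faces and the same combinatorics'' is exactly the statement being used.

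The main obstacle is the simultaneous-approximation step: ensuring that the three approximating sequences can be taken with parallel faces, the same combinatorics, \emph{and} simple, all at once. Hausdorff approximation by arbitrary polytopes is trivial, but forcing a common normal fan requires the trick of building the polytopes from a shared finite set of normal directions, and forcing simplicity requires a generic perturbation of the support numbers that does not destroy the convergence. I would spell out that (i) intersecting supporting half-spaces over an increasing dense set of directions gives outer polytopal approximations converging in Hausdorff distance (a standard lemma), (ii) a generic arbitrarily small perturbation of the support vector of a polytope with a fixed fan makes it simple without enlarging the fan, and (iii) the common refinement of finitely many normal fans is again the normal fan of a simple polytope for generic support data. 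Everything after that — the application of Theorem~\ref{thm:AF1 simple} and the limit — is routine given continuity of mixed volumes.
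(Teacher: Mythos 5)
Your proposal is correct and follows essentially the same route as the paper, which proves Theorem~\ref{AF bodies euc} precisely by approximating general convex bodies with simple convex polytopes having parallel faces and the same combinatorics, and then passing to the limit using continuity of the mixed volume together with Theorem~\ref{thm:AF1 simple}. You have merely filled in the details of the simultaneous-approximation step that the paper leaves as a one-sentence remark.
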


The complete description of the equality case is still unkown. However we will need only a particular case.

A convex body $K\in  \mathcal{K}^3_0$ is uniquely determined by its support function
$$h_K(v)=\sup_K \langle v,x\rangle $$
defined on $\mathbb{S}^2$.
The Minkowski addition corresponds to the addition of the support functions.
Therefore the mixed volume can be extended by multilinearity to differences of support functions of convex bodies.
A function on $\mathbb{S}^2$ is the support function of a convex body if and only if its homogeneous extension to $\R^3$ is a convex function.
It follows that any $C^2$ functions is the difference of two $C^2$ support functions.
By continuity, one can extend the definition
of $\v(\cdot,K_2,K_3)$ to $C^0(\mathbb{S}^2)$. By the Riesz representation theorem, there exists a Radon measure $\mu(K_2,K_3)$ on $\mathbb{S}^2$ such that

$$\v(h,K_2,K_3)=\frac{1}{3}\int_{\mathbb{S}^2} h \operatorname{d}\mu(K_2,K_3)~. $$

In particular, $\mu(K,K)$ is the \emph{area measure} of $K$, and 
$$\v(K)=\frac{1}{3}\int_{\mathbb{S}^2} h_K \operatorname{d}\mu(K,K)$$
and
$$\ap(K)=\int_{\mathbb{S}^2}\operatorname{d}\mu(K,K) =3\v(K,K,\bold{B}) $$
where $\bold{B}$ is the closed unit ball, with the support function identically equal to $1$.
The mixed-area is then
$$\ap(K_1,K_2)=3\v(K_1,K_2,\bold{B}) $$
and Alexandrov--Fenchel inequality gives
$$\ap(K_1,K_2) \geq \ap(K_1)\ap(K_2)~. $$

By symmetry of the mixed volume, $\v(K,K,\bold{B})=\v(K,\bold{B},K)$, that gives
$$ \ap(K)= \int_{\mathbb{S}^2}h_K\operatorname{d}\mu(K,\bold{B})~.$$

The classical Christoffel problem asks for existence and uniqueness of a convex body $K$
such that, for a prescribed measure $\mu$ we have $\mu=\mu(K,\bold{B})$.
The following theorem provides the uniqueness.
It is an easy consequence of the properties of the Laplacian on the sphere, see Theorem~8.3.6 in \cite{schneider}.

\begin{theorem}\label{them: christ}
We have $\mu(K_1,\bold{B})=\mu(K_2,\bold{B}) $ if and only if  there exits $x\in \R^3$ such that
$K_1=\{x\}+ K_2$.
\end{theorem}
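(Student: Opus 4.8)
The plan is to recall how the measure $\mu(K,\bold{B})$ depends linearly on the support function of $K$ through the spherical Laplacian, and then to reduce the statement to an eigenvalue computation on $\Sph^2$. I would begin from the classical fact that when a convex body $K$ has a support function $h=h_K$ of class $C^2$ on $\Sph^2$, the first area measure $\mu(K,\bold{B})$ is absolutely continuous with respect to the spherical Lebesgue measure $\sigma$, with density
\[
\mathrm{d}\mu(K,\bold{B})=\tfrac12\bigl(\Delta_{\Sph^2}h+2h\bigr)\,\mathrm{d}\sigma ,
\]
the factor $\Delta_{\Sph^2}h+2h$ being the sum of the two principal radii of curvature of $\partial K$ at the boundary point with outer unit normal $v$. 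Setting $L:=\Delta_{\Sph^2}+2$ and using that $\mu(\cdot,\bold{B})$ extends by linearity to differences of support functions (as in the discussion just before the statement), the identity $\mu(K_1,\bold{B})=\mu(K_2,\bold{B})$ becomes the single linear equation $L(h_{K_1}-h_{K_2})=0$ on $\Sph^2$.

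I would then solve this equation. The eigenvalues of $-\Delta_{\Sph^2}$ are $\ell(\ell+1)$ for $\ell=0,1,2,\dots$, the $\ell$-th eigenspace being the degree-$\ell$ spherical harmonics, so $Lu=0$, i.e.\ $-\Delta_{\Sph^2}u=2u$, forces $\ell(\ell+1)=2$, hence $\ell=1$. The degree-one spherical harmonics are exactly the restrictions to $\Sph^2$ of the linear forms $v\mapsto\langle x,v\rangle$ with $x\in\R^3$, and $v\mapsto\langle x,v\rangle$ is precisely the support function of the point $\{x\}$. Thus $h_{K_1}-h_{K_2}=h_{\{x\}}$ for some $x\in\R^3$, that is $h_{K_1}=h_{\{x\}+K_2}$, which gives $K_1=\{x\}+K_2$. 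The converse is immediate: if $K_1=\{x\}+K_2$, then $h_{K_1}-h_{K_2}=\langle x,\cdot\rangle\in\ker L$; alternatively, by Minkowski-additivity of the mixed area measure, $\mu(\{x\}+K_2,\bold{B})=\mu(\{x\},\bold{B})+\mu(K_2,\bold{B})=\mu(K_2,\bold{B})$, since a point has vanishing first area measure.

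The one genuinely delicate point --- and the main obstacle --- is that support functions of arbitrary convex bodies need not be $C^2$, so the density formula above, and with it the reduction to $Lu=0$, is at first available only for smooth bodies. I would remove this restriction by mollifying on the sphere: with a rotation-invariant smoothing kernel $\psi_\varepsilon$, the function $\psi_\varepsilon*h_{K_i}$ is the support function of a convex body $K_{i,\varepsilon}$ with $C^\infty$ support function, one has $K_{i,\varepsilon}\to K_i$ as $\varepsilon\to0$, and by continuity of the mixed area measures $\mu(K_{i,\varepsilon},\bold{B})=\psi_\varepsilon*\mu(K_i,\bold{B})$; hence $\mu(K_1,\bold{B})=\mu(K_2,\bold{B})$ passes to the smooth bodies $K_{1,\varepsilon}$ and $K_{2,\varepsilon}$, for which the previous steps yield $h_{K_{1,\varepsilon}}-h_{K_{2,\varepsilon}}=\langle x_\varepsilon,\cdot\rangle$, and letting $\varepsilon\to0$ gives $x_\varepsilon\to x$ with $h_{K_1}-h_{K_2}=\langle x,\cdot\rangle$. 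Alternatively, since $L$ is an elliptic self-adjoint operator, $\mu(K_1,\bold{B})=\mu(K_2,\bold{B})$ says exactly that $L(h_{K_1}-h_{K_2})=0$ in the distributional sense, and elliptic regularity makes $h_{K_1}-h_{K_2}$ smooth, after which the eigenvalue argument applies verbatim. This is essentially the content of Theorem~8.3.6 in \cite{schneider}.
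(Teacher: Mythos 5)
Your proof is correct, and it is precisely the argument the paper has in mind: the paper gives no proof of its own but states that the result ``is an easy consequence of the properties of the Laplacian on the sphere'' and cites Theorem~8.3.6 of \cite{schneider}, which is exactly your reduction of $\mu(K_1,\bold{B})=\mu(K_2,\bold{B})$ to $(\Delta_{\Sph^2}+2)(h_{K_1}-h_{K_2})=0$ and the identification of that eigenspace with support functions of points. Your handling of the regularity issue (mollification, or the distributional formulation plus elliptic regularity) correctly supplies the detail the citation leaves implicit.
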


\begin{remark}{\rm
If $P$ is a convex polytope, then $\mu(P,\bold{B})$
is the measure on $\mathbb{S}^2$ supported on the images of the edges of $P$ by the Gauss map, weighted by the  lengths of the corresponding edges of $P$.
}\end{remark}

Let us combine Theorem~\ref{them: christ} with the following algebraic lemma.
The proof is reproduced from \cite[Lemma 7.4.1]{schneider}; we will need it in one of the next sections.

\begin{lemma}\label{lem:algebre}
Let $K_1,K_3\in \mathcal{K}^3_0$. If
$$\ap(K_1,K_3)^2=\ap(K_1)\ap(K_3) $$
then for any $K_2\in \mathcal{K}^3_0$,
$$\ap(K_1)\ap(K_2,K_3)-\ap(K_1,K_2)\ap(K_1,K_3)=0~. $$
\end{lemma}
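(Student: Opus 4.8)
We are given that $\ap(K_1,K_3)^2 = \ap(K_1)\ap(K_3)$, i.e. equality holds in the Minkowski/Alexandrov--Fenchel inequality $\ap(K_1,K_3)^2 \geq \ap(K_1)\ap(K_3)$ for the pair $K_1, K_3$. We want to derive, for an arbitrary $K_2$, the bilinear identity
\[
\ap(K_1)\ap(K_2,K_3) - \ap(K_1,K_2)\ap(K_1,K_3) = 0~.
\]
The natural approach is a variational argument: perturb $K_3$ inside the inequality in the direction of $K_2$ and differentiate. Concretely, for small $t\geq 0$ consider the convex body $K_3 + tK_2$ (or, more flexibly, work with support functions and consider $h_{K_3} + t\,h_{K_2}$, extending $\ap(\cdot,\cdot)$ bilinearly to differences of support functions as in the preceding discussion). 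The function
\[
\phi(t) = \ap(K_1, K_3 + tK_2)^2 - \ap(K_1)\,\ap(K_3 + tK_2)
\]
is, by bilinearity of mixed area in the first slot and by $\ap(K, K) = \ap(K)$, a quadratic polynomial in $t$, and the Alexandrov--Fenchel inequality $\ap(K_1,L)^2\geq \ap(K_1)\ap(L)$ applied to $L = K_3 + tK_2$ gives $\phi(t)\geq 0$ for all $t\geq 0$. By hypothesis $\phi(0) = 0$. Hence $t=0$ is a boundary minimum of a nonnegative quadratic on $[0,\infty)$, which forces $\phi'(0)\geq 0$.

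**The key computation.** Expanding, $\ap(K_1, K_3 + tK_2) = \ap(K_1,K_3) + t\,\ap(K_1,K_2)$ and $\ap(K_3+tK_2) = \ap(K_3) + 2t\,\ap(K_2,K_3) + t^2\ap(K_2)$, so
\[
\phi'(0) = 2\,\ap(K_1,K_3)\,\ap(K_1,K_2) - 2\,\ap(K_1)\,\ap(K_2,K_3)~.
\]
Thus $\phi'(0)\geq 0$ yields $\ap(K_1,K_3)\,\ap(K_1,K_2) \geq \ap(K_1)\,\ap(K_2,K_3)$. To get the reverse inequality — and hence equality — I would run the same argument with $K_2$ replaced by a body of the form $K_2' = \lambda K_3 - K_2$ for a sufficiently large $\lambda > 0$; since $h_{K_2'} = \lambda h_{K_3} - h_{K_2}$ need not be a support function, this is the place where one genuinely uses the extension of $\ap$ to differences of support functions and the fact (noted in the excerpt) that any $C^2$ function, hence any support function perturbed by a small multiple of another, stays a support function for the relevant range of $t$. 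Applying the nonnegative-quadratic argument to $\phi(t) = \ap(K_1, K_3 + t(\lambda K_3 - K_2))^2 - \ap(K_1)\ap(K_3 + t(\lambda K_3 - K_2))$ — which is legitimate for $t$ in a one-sided neighborhood of $0$ because $h_{K_3} + t(\lambda h_{K_3} - h_{K_2})$ is a support function there — and using linearity in the direction $\lambda K_3 - K_2$ gives, after cancelling the terms proportional to $\lambda$ (which drop out because $\ap(K_1,K_3)^2 = \ap(K_1)\ap(K_3)$), precisely the opposite inequality $\ap(K_1,K_3)\ap(K_1,K_2) \leq \ap(K_1)\ap(K_2,K_3)$. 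Combining the two gives the claimed identity.

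**Main obstacle.** The only subtlety — and the step I expect to require the most care — is the admissibility of the perturbation in the second application: $h_{K_3} - t\,h_{K_2}$ is not a support function for all $t>0$, so the inequality $\phi(t)\geq 0$ is only available on a small interval $[0,\varepsilon)$; but that is all one needs, since we only differentiate at $t=0$, and the extension of mixed area to differences of $C^2$ support functions (together with the quadratic nature of $\phi$) makes the one-sided derivative computation go through verbatim. Once both one-sided derivative inequalities are in hand, no case analysis of the Alexandrov--Fenchel equality locus is needed — the identity follows purely from the first-order condition at a boundary minimum. Alternatively, one can phrase the whole argument as: the quadratic form $Q(a,b) := \ap(K_1)(a^2\ap(K_3) + 2ab\,\ap(K_2,K_3) + b^2\ap(K_2)) - (a\,\ap(K_1,K_3) + b\,\ap(K_1,K_2))^2$ is $\leq 0$ on the cone of $(a,b)$ with $a h_{K_3} + b h_{K_2}$ a support function (by Alexandrov--Fenchel), vanishes at $(1,0)$, and $(1,0)$ is interior to that cone up to sign in the $b$-direction; hence the gradient of $Q$ vanishes at $(1,0)$, which is exactly the asserted identity together with $\ap(K_1,K_3)^2 = \ap(K_1)\ap(K_3)$.
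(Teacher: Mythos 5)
Your first step is sound: $\phi(t)=\ap(K_1,K_3+tK_2)^2-\ap(K_1)\ap(K_3+tK_2)$ is a nonnegative quadratic on $[0,\infty)$ vanishing at $t=0$, so $\phi'(0)\geq 0$, which gives $\ap(K_1,K_3)\ap(K_1,K_2)\geq\ap(K_1)\ap(K_2,K_3)$. The gap is in the second step, and it is exactly at the point you flagged. To get the reverse inequality you apply Alexandrov--Fenchel to the ``body'' with support function $(1+t\lambda)h_{K_3}-t\,h_{K_2}$, and you justify this by appealing to the remark that every $C^2$ function is a difference of $C^2$ support functions. That remark does not give what you need: it lets one \emph{extend the mixed area by multilinearity}, but the inequality $\ap(K_1,L)^2\geq\ap(K_1)\ap(L)$ is only available when $L$ is an actual convex body. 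And $(1+t\lambda)h_{K_3}-t\,h_{K_2}$ is in general \emph{not} a support function for any $t>0$, however large $\lambda$ is: take $K_3$ a tetrahedron and $K_2$ the unit ball. On the open regions of $\Sph^2$ where $h_{K_3}$ is the restriction of a linear function, the homogeneous extension of $(1+t\lambda)h_{K_3}-t|x|$ has strictly negative tangential Hessian, so it is not convex; equivalently, no positive multiple of a ball is a Minkowski summand of a polytope. Rescaling $h_{K_3}$ does not cure this, since its tangential Hessian vanishes there. Nor can you simply smooth $K_2,K_3$ first and pass to the limit, because the hypothesis $\ap(K_1,K_3)^2=\ap(K_1)\ap(K_3)$ is an exact equality that is not preserved under approximation. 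The same objection applies to your closing reformulation: the cone of $(a,b)$ for which $ah_{K_3}+bh_{K_2}$ is a support function can be exactly $\{a\geq 0,\ b\geq 0\}$, so $(1,0)$ need not be interior in the $b$-direction and you only recover the one-sided inequality you already have.

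The paper's proof (following Schneider, Lemma~7.4.1) gets the second inequality without ever leaving the cone of convex bodies. Writing $A=\ap(K_1,K_3)^2-\ap(K_1)\ap(K_3)$, $B=\ap(K_1)\ap(K_2,K_3)-\ap(K_1,K_2)\ap(K_1,K_3)$, $C=\ap(K_1,K_2)^2-\ap(K_1)\ap(K_2)$, the pair $(K_2+\lambda K_3,\,K_1)$ gives $A\lambda^2-2B\lambda+C\geq 0$ (your step one in disguise), while the pair $(K_2+t\lambda K_1,\,K_3+tK_1)$ --- genuine bodies for all $t,\lambda\geq 0$ --- produces an Alexandrov--Fenchel deficit that is a quadratic polynomial in $t$ (the $t^3$ and $t^4$ coefficients cancel identically) which is nonnegative for all $t\geq 0$; its leading coefficient $A\lambda^2+2B\lambda+C$ must therefore be nonnegative. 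The two quadratics in $\lambda$ together force $B^2\leq AC=0$, hence $B=0$. If you want to keep your variational framing, you must replace your inadmissible perturbation direction $\lambda K_3-K_2$ by some such two-parameter family of honest Minkowski sums.
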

\begin{proof}
For every $\lambda \geq 0$ by the Alexandrov--Fenchel inequality we have
\[
\ap(K_2+\lambda K_3,K_1)^2-\ap(K_2+\lambda K_3,K_2+\lambda K_3)\ap(K_1)\geq 0~.
\]
This can be rewritten as
\begin{equation}
\label{second 1}
A \lambda^2 - 2\lambda B + C \geq 0 \quad \text{ for all } \lambda > 0~.
\end{equation}
with 
$$A=\ap(K_1,K_3)^2-\ap(K_1)\ap(K_3) $$
$$B=\ap(K_2,K_3)\ap(K_1)-\ap(K_1,K_2)\ap(K_1,K_3) $$
$$C=\ap(K_1,K_2)^2-\ap(K_1)\ap(K_2)~. $$

On the other hand, for $t>0$, Alexandrov--Fenchel inequality gives
\[
\ap(K_2+t\lambda K_1,K_3+tK_1)^2-\ap(K_2+t\lambda K_1)\ap(K_3+tK_1) \geq 0~.
\]
Extracting the coefficient at $t^2$, we obtain
\begin{equation}
\label{second 2}
A \lambda^2 + 2\lambda B + C \geq 0 \quad \text{ for all } \lambda > 0~.
\end{equation}

Together, \eqref{second 1} and \eqref{second 2} imply $B^2-AC \leq 0$.
By assumption $A=0$, therefore $B=0$.
\end{proof}

Thus if
$$\ap(K_1,K_3)^2=\ap(K_1)\ap(K_3)~, $$
then for $\alpha = \ap(K_1,K_3)/\ap(K_1)$ and for any $K_2\in \mathcal{K}^3_0$ we have
$$\ap(K_2,K_3)=\ap(K_2,\alpha K_1)~. $$
Since $K_2$ is abitrary, by linearity and continuity we obtain 
$$\mu(K_3,\bold{B})=\mu(\alpha K_1,\bold{B})~. $$
By invoking Theorem~\ref{them: christ}, we obtain a description of the equality in Theorem~\ref{AF bodies euc} in a special case.

\begin{theorem}\label{thm: cas eglaite eucl}
For $K_1,K_2\in \mathcal{K}_0^3$,
$$\ap(K_1,K_2)^2\geq \ap(K_1)\ap(K_2) $$ with equality if and only if there is $\alpha>0$ and $x\in \R^3$ such that $K_1=\{x\}+\alpha K_2$.
\end{theorem}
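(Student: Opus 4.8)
The plan is to deduce Theorem~\ref{thm: cas eglaite eucl} from the Alexandrov--Fenchel inequality (Theorem~\ref{AF bodies euc}) applied with the unit ball $\bold{B}$ in the third slot, together with the algebraic Lemma~\ref{lem:algebre} and the uniqueness in the Christoffel problem (Theorem~\ref{them: christ}). The inequality $\ap(K_1,K_2)^2\geq\ap(K_1)\ap(K_2)$ is immediate: by definition $\ap(K_i,K_j)=3\v(K_i,K_j,\bold{B})$, so this is just Theorem~\ref{AF bodies euc} with $K_3=\bold{B}$. The content is the equality case.

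For the equality case, first suppose $K_1=\{x\}+\alpha K_2$ with $\alpha>0$. Since the mixed area is translation invariant in each argument and positively homogeneous, $\ap(K_1)=\alpha^2\ap(K_2)$ and $\ap(K_1,K_2)=\alpha\,\ap(K_2,K_2)=\alpha\,\ap(K_2)$, so equality holds; this direction is routine. Conversely, assume $\ap(K_1,K_2)^2=\ap(K_1)\ap(K_2)$. I would apply Lemma~\ref{lem:algebre} with the roles $K_1\mapsto K_1$, $K_3\mapsto K_2$: the hypothesis of the lemma is exactly our equality, so its conclusion gives, for every $K\in\mathcal{K}_0^3$,
\[
\ap(K_1)\,\ap(K,K_2)-\ap(K_1,K)\,\ap(K_1,K_2)=0~.
\]
Set $\alpha=\ap(K_1,K_2)/\ap(K_1)$, which is positive since $\ap$ is a mixed area of convex bodies (all quantities $\ap(\cdot)$, $\ap(\cdot,\cdot)$ are strictly positive for bodies with nonempty interior, e.g. because each equals $3\v(\cdot,\cdot,\bold{B})$ and mixed volumes of full-dimensional bodies are positive). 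Dividing by $\ap(K_1)$ rewrites the displayed identity as $\ap(K,K_2)=\ap(K,\alpha K_1)$ for all $K$. Unwinding the definition, this says $3\v(K,K_2,\bold{B})=3\v(K,\alpha K_1,\bold{B})$ for all convex bodies $K$; by linearity and continuity of $\v(\cdot,K',K'')$ on $C^0(\Sph^2)$, the associated Radon measures coincide: $\mu(K_2,\bold{B})=\mu(\alpha K_1,\bold{B})$. Finally Theorem~\ref{them: christ} yields $x\in\R^3$ with $K_2=\{x\}+\alpha K_1$, equivalently $K_1=\{-x/\alpha\}+(1/\alpha)K_2$, which is the asserted form with a new positive scalar and translation vector.

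The only genuinely delicate point is the reduction step identifying $\ap(K,K_2)=\ap(K,\alpha K_1)$ for \emph{all} $K$ with the equality of area measures $\mu(K_2,\bold{B})=\mu(\alpha K_1,\bold{B})$: one must check that the mixed area $\ap(\cdot,K')=3\v(\cdot,K',\bold{B})$, a priori only defined for convex bodies in the first slot, extends to a continuous linear functional on $C^0(\Sph^2)$ and is therefore determined by, and determines, the measure $\mu(K',\bold{B})$. This is exactly the Riesz-representation discussion recalled just before Theorem~\ref{them: christ}, so I would simply cite that passage. Everything else — positivity of $\ap(K_1)$, homogeneity and translation invariance used in the easy direction, and the final algebraic rearrangement — is bookkeeping.
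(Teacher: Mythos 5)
Your proposal is correct and follows essentially the same route as the paper: Alexandrov--Fenchel with $\bold{B}$ in the third slot for the inequality, then Lemma~\ref{lem:algebre} to convert the equality hypothesis into the identity $\ap(K,K_2)=\ap(K,\alpha K_1)$ for all $K$, the Riesz-representation passage to conclude $\mu(K_2,\bold{B})=\mu(\alpha K_1,\bold{B})$, and Theorem~\ref{them: christ} to finish. The only (harmless) difference is that you also spell out the routine converse direction, which the paper leaves implicit.
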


This is a reformulation of a theorem of Favard and Kubota, see Theorem~7.6.2 \cite{schneider}.

The argument above has the following heuristic meaning.
The area measure is the gradient of the volume, in the sense that $\int_{\mathbb{S}^2} h \operatorname{d} \mu(K,K)$ is the directional derivative of the volume at $K$ in the direction $h$, for $h\in C^0(\mathbb{S}^2)$ \cite{carlier}.
In the polyhedral case, this is expressed by the fact that
$\partial_i \v(P)=\a(F_i(P))$.

Also, in the same vein, one can consider $\mu(K,\bold{B})$ as a derivative of $\mu(K,K)$. In the polyhedral case,  if $i\not= j$, $\partial_{ij} \v (P)=\frac{1}{\sin \phi_{ij}}l_{ij}$, where $l_{ij}$ is the length of the edge between the $i$h face and the $j$th face (considered as $0$ if the faces do not meet along an edge).  Alexandrov--Fenchel inequality says that the map $$(K_1,K_2)\mapsto
\ap(K_1,K_2)^2-\ap(K_1)\ap(K_2)$$ takes only non-positive values.
Thus, the zeros of this map are its critical points. The vanishing of the partial derivatives leads to the fact that 
 $\mu(K_1,\bold{B})$ and $\mu(K_2,\bold{B})$ are proportional.

\section{Alternative argument for the signature of the area form}
\label{sobolev}
Here we outline an alternative argument for Theorem~\ref{thm: cas eglaite eucl}.
The area of a convex body $K$ with support function $h_K:\mathbb{S}^2\to \R$ can be computed as
(see \cite[Theorem~4.2a]{heil}, \cite[p.297,298]{schneider})

\begin{equation}\label{eq:a lap}\ap(K)= \int_{\mathbb{S}^2} h_K^2 - \frac{1}{2}\|\nabla h_K\|^2. \end{equation}
Here $\nabla h_K$ is the weak spherical gradient of $h_K$, and $\|\cdot\|$ is its norm with respect to the metric of the unit sphere.
Note that this formula is related to the spherical Laplacian $\Delta$ if $h_K$ is $C^2$. Namely, using Green formula one obtains
$$\ap(K)=\int_{\mathbb{S}^2} h_K^2 + \frac{1}{2} h_K\Delta h_K~. $$

Recall that $2$ is the first non-zero eigenvalue $\lambda_1$ of the spherical Laplacian.
The associated eigenspace is the space of restriction to $\mathbb{S}^2$ of linear functions on $\R^3$.
But linear functions are exactly the support functions of points.
Thus quotienting by the eigenspace of $\lambda_1$ is the same as considering convex bodies up to translations. 

Also, the eigenspace of the spherical laplacian for the zero eigenvalue is the space of constant functions on $\mathbb{S}^2$. Using the  Rayleigh characterization of the first
nonzero eigenvalue, it is easy to deduce that, on any finite dimensional vector space spanned by support functions of convex bodies, quotiented by linear forms, $\ap$ has Lorentzian signature: it is non-degenerate with a unique positive direction.
Then Theorem~\ref{thm poly r3} is simply a translation of the
reversed Cauchy--Schwarz inequality for Lorentzian vector spaces.
We refer to \cite{DF} for details about this argument and generalization to higher dimensions. The above  argument is a generalization to the dimension $3$ of the content of Remark~\ref{remark debin}.

\section{Fuchsian convex polyhedra}\label{fuchsian}
The Minkowski space $\R^{2,1}$ is $\R^3$ equipped with the symmetric bilinear form
$$\langle x,y \rangle_{2,1}=x_1y_1+x_2y_2-x_3y_3~. $$
For $r>0$ denote
$$\H^2_r=\{x | \langle x,x\rangle_{2,1}=-r^2, x_3>0 \}~.$$ 
The family $\H^2_r$, $r>0$, forms a foliation of the future cone of the origin $I^+(0)=\{x | \langle x,x\rangle_{2,1}<0 \}$.
Note that $\H^2_1:=\H^2$ equipped with the  metric induced from the Minkowski scalar product is a model of the hyperbolic plane.

A plane $F$ in $\R^{2,1}$ is called \emph{space-like} if the
restriction of $\langle \cdot,\cdot \rangle_{1,2}$ to it is positive definite. Its \emph{future side} is the half-space bounded by $F$ which contains the direction $(0,0,1)$. 

Let $F_1,\ldots,F_m$ be space-like planes, $F_i$ tangent to $\H^2_{r_i}$ and  let $\Gamma$ be a group of linear isometries of $\R^{2,1}$, such that $\H^2/\Gamma$ is a compact (oriented) hyperbolic surface. 

A \emph{convex $\Gamma$-polyhedron} $P$ is the intersection of the future side of the orbits of the $F_i$ for $\Gamma$.
It is easy to see that there exist $r,r'$ such that $\H^2_r \subset P \subset \H^2_{r'}$, and that the faces are compact convex polygons.

The induced metric on the boundary of $P$ is a flat metric with conical singularities.
Since the image of the  Gauss map of a vertex is a convex polygon in the hyperbolic space,
the singular curvatures at the vertices are negative (the argument is similar to the one of Figure~\ref{fig gauss}).
Thus, the induced metric on 
$\partial P/\Gamma$ is a flat metric with cone singularities of negative curvature.

Denote

\begin{multline*}
H^\Gamma(P)=\{\text{ convex } \Gamma\text{-polyhedra with faces parallel to those of }P\\
\text{ and the same combinatorics as }P\}~.
\end{multline*}

As in the case of convex polytopes, one can check that
 convex $\Gamma$-polyhedra are determined by their support numbers $h_1,\ldots,h_m$ and that
$H^\Gamma(P)$ can be identified with a convex polyhedral cone in  
$\R^m$. This cone has a non-empty interior if and only if $P$ is simple.

Similarly to Figure~\ref{fig sup edge} and Figure~\ref{fig supface}, a support number of a face is computed as (\cite[Lemma~2.2]{Fem}):
\begin{equation}\label{eq: supp num mink}
 h_{ij}=-\frac{h(j)-h(i)\cosh \varphi_{ij}}{\sinh \varphi_{ij}}~,
\end{equation}
in particular, edge lengths are linear functions of the support numbers. See also Figure~\ref{fig fuch}.

\begin{figure}[ht]
\begin{center}
\psfrag{pi}{$p_i$}
\psfrag{pik}{$p_{ik}$}
\psfrag{hijk}{$h_{ijk}$}
\psfrag{hji}{$h_{ji}$}
\psfrag{hij}{$h_{ij}$}
\psfrag{om}{$\omega_{ijk}$}
\psfrag{0}{$0$}
\psfrag{hi}{$h_i$}
\psfrag{hj}{$h_j$}\psfrag{hik}{$h_{ik}$}
\psfrag{phi}{$\phi_{ij}$}
\includegraphics[width=0.7\linewidth]{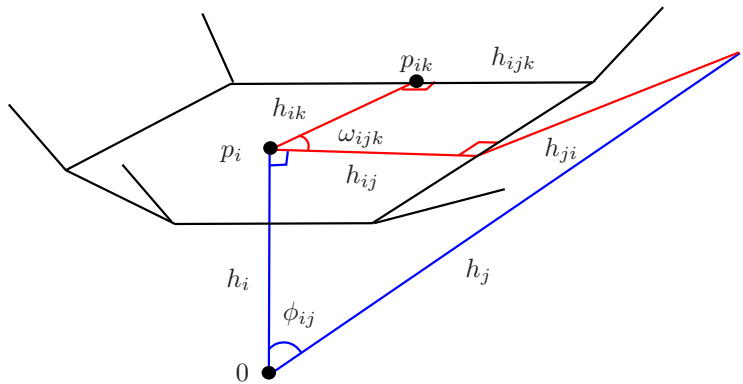}\caption{$h_{ij}=-\frac{1}{\sinh \varphi_{ij}}(h_j-h_i\cosh\varphi_{ij})$, $h_{ijk}=\frac{1}{\sin \omega_{ijk}}(h_{ij}-h_{ik}\cos\omega_{ijk})$}\label{fig fuch}
\end{center}
\end{figure}

Note that for any convex $\Gamma$-polyhedra $P_1,P_2$ and any $\lambda > 0$ the Minkowski sum
$P_1+\lambda P_2$ is a convex $\Gamma$-polyhedron.
The \emph{covolume} of $P$ is defined as the volume (with respect to the Lebesgue measure of $\R^3$) of 
$\left( I^+(0) \setminus P\right)/\Gamma$.
Note that 
$$ (I^+(0) \setminus P_1) + (I^+(0) \setminus P_2)\not=  I^+(0) \setminus (P_1 +  P_2)~. $$

The covolume can be computed as
$$\cov(P)=\frac{1}{3}\sum_{i=1}^m h_i \a(F_i) $$
where $\a(F_i)$ is the area of the $i$th face for the induced metric on the space-like plane containing $F_i$. 
Note that if instead of the Minkowski scalar product we consider the Euclidean scalar product,
then the covolume does not change its value, while the support numbers and face areas do.

On $\R^m$, the covolume defines a degree $3$ homogeneous polynomial 
$$\cov_P(h)=\frac{1}{3}\sum_{i=1}^m h_i \a_{F_i}(h_{i\bullet})$$ 
where $h_{i\bullet}$ is the support vector of the $i$th face.
The \emph{mixed covolume} is the symmetric $3$-linear form
$$\cov_P(h,k,p)=\frac{1}{3}\sum_{i=1}^m h_i \a_{F_i}(k_{i\bullet}, p_{i\bullet})~. $$

We have the following result \cite{Fgafa} that emphasizes an important difference between the Euclidean case and the Minkowski case.
Contrarily to the Euclidean case, the proof is a direct calculation; a similar calculation does not suffice in the Euclidean case.
A naive reason is that 
$\cos$ is replaced by $\cosh$ in Lorentzian geometry, and that $\cosh x \ge 1$.
A deeper reason will be explained in the next sections.

\begin{theorem}\label{thm:cov def pos}
For $h\in H^\Gamma(P)$,
$\cov_P(\cdot,\cdot,h)$ is positive definite.

\end{theorem}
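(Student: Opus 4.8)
The plan is to verify positive-definiteness of $\cov_P(\cdot,\cdot,h)$ by a direct computation using the structure of the covolume as a sum over faces, exactly as suggested by the remark preceding the statement. Write
\[
\cov_P(v,v,h)=\frac{1}{3}\sum_{i=1}^m h_i\,\a_{F_i}(v_{i\bullet},v_{i\bullet})
\]
and observe that $\a_{F_i}(\cdot,\cdot)$ is the \emph{planar} mixed area of the $i$-th face, a quantity with the signature $(1,2,\cdot)$ studied in Section~\ref{polygons}. So the difficulty is that each individual term is indefinite, and positivity must come from a global cancellation. First I would rewrite each face term using the edge-length decomposition: recall that $\a_{F_i}(v_{i\bullet},w_{i\bullet})=\tfrac12\sum_{j}h_{ij}\ell_{ij}(w)$ where the sum is over faces $j$ adjacent to $F_i$. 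Plugging in and collecting the contribution of each edge $\{i,j\}$ of the polyhedron, one obtains a sum over pairs $(i,j)$ of a local quadratic expression in $v_i,v_j,v_{ij},v_{ji}$ multiplied by the edge length $\ell_{ij}(h)>0$ (which is positive because $h\in H^\Gamma(P)$, and edge lengths are linear positive functions of support numbers there). So after reorganization the claim reduces to showing that a certain quadratic form supported on each edge, weighted by positive edge lengths, is positive definite modulo the obvious degeneracies.

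The essential point — and this is where $\cosh$ versus $\cos$ enters — is the following. Using \eqref{eq: supp num mink}, $h_{ij}=-(h_j-h_i\cosh\varphi_{ij})/\sinh\varphi_{ij}$, so $v_{ij}$ is an explicit linear combination of $v_i$ and $v_j$. Substituting these relations into the per-edge quadratic form, one computes that the coefficient matrix in the variables $(v_i,v_j)$ along the edge $\{i,j\}$ is, up to a positive factor and the positive weight $\ell_{ij}(h)$, something of the shape
\[
\begin{pmatrix} \cosh\varphi_{ij} & -1 \\ -1 & \cosh\varphi_{ij}\end{pmatrix},
\]
which is positive definite precisely because $\cosh\varphi_{ij}>1$ for $\varphi_{ij}\neq 0$. (In the Euclidean case the analogous matrix has $\cos\varphi_{ij}$ on the diagonal, with $|\cos\varphi_{ij}|<1$, and is indefinite — this is exactly the ``naive reason'' the theorem statement alludes to, and why the direct argument fails there.) Summing the positive contributions of all edges and all faces then gives $\cov_P(v,v,h)\geq 0$, with equality forcing $v_i=v_j$ for every adjacent pair, i.e.\ $v$ constant; but a constant support vector corresponds to rescaling, which does change the covolume in the Minkowski setting (unlike the translation directions in the Euclidean case), so in fact $\cov_P(v,v,h)=0$ forces $v=0$. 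Hence $\cov_P(\cdot,\cdot,h)$ is positive definite.

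I expect the main obstacle to be the bookkeeping in the reorganization step: one has to pass from the face-indexed sum $\sum_i h_i\,\a_{F_i}(v_{i\bullet})$ to an edge-indexed sum, correctly tracking how $v_{ij}$ (a support number of the face $F_i$ in its own plane) relates back to the ambient support numbers $v_i,v_j$ via the two relations in Figure~\ref{fig fuch} (first the Lorentzian relation $h_{ij}=-(h_j-h_i\cosh\varphi_{ij})/\sinh\varphi_{ij}$ passing from $\R^{2,1}$ to the space-like plane, then possibly the Euclidean relation $h_{ijk}=(h_{ij}-h_{ik}\cos\omega_{ijk})/\sin\omega_{ijk}$ inside that plane), and verifying that the cross terms between different edges of a common face assemble into the clean per-edge matrices above rather than leaving residual indefinite pieces. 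Once the identity $\cov_P(v,v,h)=\frac{1}{6}\sum_{\{i,j\}}\ell_{ij}(h)\,c_{ij}\big(\cosh\varphi_{ij}(v_i^2+v_j^2)-2v_iv_j\big)$ (schematically, with $c_{ij}>0$ a fixed geometric constant) is established, positivity and the equality analysis are immediate, and the theorem follows; the details of this computation are carried out in \cite{Fgafa}.
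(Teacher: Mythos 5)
Your proposal is correct and follows essentially the same route as the paper: reduce to the Hessian of $\cov_P$ at $h$, compute its entries from the linear relations \eqref{eq: supp num mink} and $\partial\a(F_i)/\partial h_{ik}=l_{ik}$, and extract positivity from $\cosh\varphi_{ij}>1$ with the edge lengths $\ell_{ij}(h)>0$ as weights. The only (cosmetic) difference is the last linear-algebra step --- you regroup the quadratic form into per-edge positive definite $2\times 2$ blocks $\frac{\ell_{ij}(h)}{\sinh\varphi_{ij}}\bigl(\cosh\varphi_{ij}(v_i^2+v_j^2)-2v_iv_j\bigr)$, which in particular makes your detour through ``$v$ constant'' unnecessary, whereas the paper invokes strict diagonal dominance of the symmetric Jacobian of the face-area map; these are equivalent formulations of the same computation.
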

\begin{proof}
By $3$-linearity, $D^2_{k,p}\cov_P(h)=3\cov_P(h,k,p)$.
Hence it suffices to prove that the Hessian  of $\cov_P$ at $h$ is positive definite. One can show that
$$D^2_{k,p}\cov_P(h) = \sum_{i=1}^m p_i D_k \a_{F_i} (h_{i\bullet}) $$
hence it suffices to study the Jacobian matrix of the map from $\R^m$ to $\R^m$ that associated to $h$ the areas of the faces.
A straightforward computation yields the result: from \eqref{eq: supp num mink},
\begin{eqnarray}
 \ \label{eq:der lor1} &&\frac{\partial h_{ij}}{\partial h_j}=-\frac{1}{\sinh \varphi_{ij}}~, \\ 
\ &&\frac{\partial h_{ij}}{\partial h_i}=\frac{\cosh \varphi_{ij}}{\sinh \varphi_{ij}}~.\label{eq:der lor2}
\end{eqnarray}
If $h_i=h_j$ and if the quadrilateral is deformed under this condition, then
\begin{equation*}
 \frac{\partial h_{ij}}{\partial h_i}=\frac{\cosh \varphi_{ij}-1}{\sinh \varphi_{ij}}~.\label{eq:der lor3}
\end{equation*}

Recall that for Euclidean polygons we have
\begin{equation}\label{eq:der areab}
\frac{\partial \a(F_i)}{\partial h_{ik}}=l_{ik}~,
\end{equation}
where $l_{ik}$ is the length of the edge between the face supported by $h_i$ and the one supported by $h_k$.

We denote by $E_i^j\subset\Gamma\mathcal{I}$ is the set of indices $k\in\Gamma j$ such that $F_k$ 
 is adjacent to $F_i$ along an edge. 
If $j\in\mathcal{I}\setminus\{i\}$ we get
$$\frac{\partial \a(F_i)}{\partial h_j}=\sum_{k\in E_i^j}\frac{\partial \a(F_i)}{\partial h_{ik}}\frac{\partial h_{ik}}{\partial h_j}~.$$
From \eqref{eq:der lor1} and \eqref{eq:der areab} it follows that 
\begin{equation}\label{eq: der par A}
\frac{\partial \a(F_i)}{\partial h_j}=-\sum_{k\in E_i^j}\frac{l_{ik}}{\sinh \varphi_{ik}}~.
\end{equation}
For the diagonal terms:
\begin{eqnarray}\label{eq: coefdiag}
\ \frac{\partial \a(F_i)}{\partial h_i}&=&\sum_{j\in\mathcal{I}\setminus\{i\}}\sum_{k\in E_i^j}
\frac{\partial \a(F_i)}{\partial h_{ik}}\frac{\partial h_{ik}}{\partial h_i}
+\sum_{k\in E_i^i}
 \frac{\partial \a(F_i)}{\partial h_{ik}}\frac{\partial h_{ik}}{\partial h_i}\nonumber \\
\ &\stackrel{(\ref{eq:der areab},\ref{eq:der lor2},\ref{eq:der lor3})}{=}&\sum_{j\in\mathcal{I}\setminus\{i\}}
\sum_{k\in E_i^j}\cosh \varphi_{ik}\frac{ l_{ik}}{\sinh \varphi_{ik}} +
 \sum_{k\in E_i^i} l_{ik}\frac{\cosh \varphi_{ik}-1}{\sinh \varphi_{ik}}~.
\end{eqnarray} 

Since $\cosh x>1$ for $x\not= 0$, \eqref{eq: coefdiag} and \eqref{eq: der par A} lead to
$$\frac{\partial \a(F_i)}{\partial h_i}  > \sum_{j\in\mathcal{I}\setminus\{i\}} \left|\frac{\partial \a(F_i)}{\partial h_j} \right|>0$$
that means that the Jacobian is symmetric, strictly diagonally dominant
with positive diagonal entries, hence positive definite.
 
\end{proof}

The boundary area of $P$ is the sum of the areas of its faces
(recall that the plane spanned by a face carries a Euclidean metric).
This defines the following quadratic form on $\R^m$:
$$\am_P(h)=\sum_{i=1}^m \a_{F_i}(h(i))~. $$

Suppose that $P$ is simple and circumscribed about the hyperboloid, that is has the support vector $\mathbf{1}=(1,\ldots,1) \in \R^m$.
Then Theorem~\ref{thm:cov def pos} with $h=\mathbf{1}$ implies the following.

\begin{corollary}
Let $P$ be a simple circumscribed convex $ \Gamma$-polyhedron. Then $\am_P$ is positive definite.
\end{corollary}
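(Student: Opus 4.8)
The plan is to reduce the statement to Theorem~\ref{thm:cov def pos}, exactly as in the Euclidean case of Section~\ref{area polytope} one reduces the signature computation for $\ap_P$ to the Alexandrov--Fenchel inequality via the identity $\ap_P = 3\v_P(\mathbf{1},\cdot,\cdot)$. The key point is the analogous identity
\[
\am_P(h) \;=\; \sum_{i=1}^m \a_{F_i}(h(i)) \;=\; \sum_{i=1}^m \a_{F_i}(h_{i\bullet},h_{i\bullet}) \;=\; 3\,\cov_P(\mathbf{1},h,h)~,
\]
where the middle equality uses that each $\a_{F_i}$ is a quadratic form, so $\a_{F_i}(v)=\a_{F_i}(v,v)$, and the last one uses the explicit formula $\cov_P(h,k,p)=\tfrac13\sum_i h_i\,\a_{F_i}(k_{i\bullet},p_{i\bullet})$ evaluated at $h=\mathbf{1}$.

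First I would record this identity; it is a one-line check from the definitions of $\cov_P$ and of $\am_P$ given just above the corollary, and it is the Minkowski analogue of $\ap_P = 3\v_P(\mathbf{1},\cdot,\cdot)$. Second, since $P$ is assumed circumscribed about the hyperboloid, its support vector is $\mathbf{1}\in\R^m$, and since $P\in H^\Gamma(P)$ we have $\mathbf{1}\in H^\Gamma(P)$ (and, $P$ being simple, $\mathbf{1}$ even lies in the interior of this cone). Third, I would apply Theorem~\ref{thm:cov def pos} with $h=\mathbf{1}$: it asserts that the symmetric bilinear form $\cov_P(\cdot,\cdot,\mathbf{1})$ is positive definite on $\R^m$. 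Combining with the identity above, $\am_P(h)=3\,\cov_P(h,h,\mathbf{1})>0$ for every $h\neq 0$, which is exactly the claim.

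There is essentially no obstacle here — all the analytic work has already been done in Theorem~\ref{thm:cov def pos}, whose proof is the diagonal-dominance computation using $\cosh\varphi>1$. The only point deserving a sentence of care is verifying that contracting the symmetric $3$-linear mixed covolume against $\mathbf{1}$ reproduces the boundary-area quadratic form; this follows immediately from the coordinate formulas, since plugging $\mathbf{1}$ into the first slot simply removes the leading factor $h_i$ from each term of $\cov_P$.
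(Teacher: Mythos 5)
Your proof is correct and follows exactly the paper's route: the paper likewise obtains this corollary by applying Theorem~\ref{thm:cov def pos} with $h=\mathbf{1}$, using that a circumscribed $P$ has support vector $\mathbf{1}$ and that $\am_P(\cdot,\cdot)=3\cov_P(\mathbf{1},\cdot,\cdot)$ (the Minkowski analogue of $\ap_P=3\v_P(\mathbf{1},\cdot,\cdot)$). Your explicit verification of that contraction identity is a welcome addition, since the paper leaves it implicit.
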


Let  $H_1^\Gamma(P)$ be the quotient of $H^\Gamma(P)$ by homotheties. Hence, if $P$ is simple and circumscribed, $H_1^\Gamma(P)$ is 
a convex spherical polyhedron (with non empty interior) of $\mathbb{S}^{m-1}$.  
To remove the assumptions on $P$, we will need a more general results on $\Gamma$ convex sets. Namely,  Theorem~\ref{thm: cas eglaite mink} in the next section implies the following result.

\begin{theorem}\label{thm prin fuch}
Let $P$ be a convex $\Gamma$-polyhedron. Then the quadratic form $\am_P$ is positive definite.
\end{theorem}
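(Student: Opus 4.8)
The plan is to obtain Theorem~\ref{thm prin fuch} from the same circle of ideas as its Euclidean counterpart Theorem~\ref{thm poly r3}, namely by reducing it to a reversed Cauchy--Schwarz inequality for the boundary area of \emph{$\Gamma$-convex bodies} (that is, $\Gamma$-invariant convex subsets of $I^+(0)$ with space-like boundary). For such bodies one defines the covolume and, by polarization, the mixed covolume $\cov(K_1,K_2,K_3)$; the boundary area is then $\am(K)=3\cov(K,K,\H^2)$ with polarization $\am(K_1,K_2)=3\cov(K_1,K_2,\H^2)$, the hyperboloid $\H^2$ taking over the role played by the unit ball in $\R^3$. The input I would invoke is the analogue of Theorem~\ref{thm: cas eglaite eucl}, recorded as Theorem~\ref{thm: cas eglaite mink} and proved in the next section: for $\Gamma$-convex bodies $K_1,K_2$ one has
\[
\am(K_1,K_2)^2\le\am(K_1)\,\am(K_2),
\]
with equality if and only if $K_1=\alpha K_2$ for some $\alpha>0$. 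Both differences from the Euclidean case are already visible in Theorem~\ref{thm:cov def pos}: the inequality is \emph{reversed}, and there is no translation term in the equality case, because a $\Gamma$-invariant convex set has no nontrivial translate that is again $\Gamma$-invariant (a cocompact lattice in $SO(2,1)$ fixes only the origin).

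Granting this, write $V\subseteq\R^m$ for the linear span of the convex cone $H^\Gamma(P)$ (so $V=\R^m$ when $P$ is simple), and let me show that $\am_P|_V$ is positive definite. If $Q,R\in H^\Gamma(P)$ have support vectors $h,k\in V$, then $Q,R$ are $\Gamma$-convex bodies with $\am_P(h)=\a(\partial Q)>0$ (the boundary is space-like) and $\am_P(h,k)=\am(Q,R)$, so Theorem~\ref{thm: cas eglaite mink} gives, for all $h,k\in H^\Gamma(P)$,
\[
\am_P(h,k)^2\le\am_P(h)\,\am_P(k),\qquad\text{with equality iff }h=\lambda k,\ \lambda>0.
\]
First, $\am_P|_V$ has no negative direction: if $v\in V$ satisfied $\am_P(v)<0$, fix $h$ in the relative interior of $H^\Gamma(P)$, so $h+\epsilon v\in H^\Gamma(P)$ for small $|\epsilon|$; applying the displayed inequality to the pair $h,\,h+\epsilon v$, the coefficients of $\epsilon^0$ and $\epsilon^1$ cancel and the $\epsilon^2$ part reads $\am_P(h,v)^2\le\am_P(h)\am_P(v)$, whose right-hand side is negative --- impossible. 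Hence $\am_P|_V$ is positive semidefinite. Second, $\ker(\am_P|_V)=0$: let $v\in V$ with $\am_P(v)=0$; by semidefiniteness $\am_P(v,\cdot)|_V\equiv0$, so with $h$ as above and $\epsilon\ne0$ small we get $\am_P(h,h+\epsilon v)=\am_P(h)=\am_P(h+\epsilon v)$, i.e.\ equality holds in the reversed Cauchy--Schwarz inequality for $h$ and $h+\epsilon v$. The equality case forces $h=\lambda(h+\epsilon v)$, i.e.\ $(1-\lambda)h=\lambda\epsilon v$, and since $\epsilon\ne0$ this makes $v$ proportional to $h$; then $\am_P(v)=0$ with $\am_P(h)>0$ gives $v=0$. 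Thus $\am_P|_V$ is positive definite, which is the assertion of the theorem; in particular $H_1^\Gamma(P)$ is a convex spherical polyhedron in the unit sphere of $V$, of dimension $\dim V-1\le n/2+1-2g$.

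It remains to prove Theorem~\ref{thm: cas eglaite mink}, which is where the real work lies and which parallels \S\ref{area polytope} together with the treatment of Theorem~\ref{AF bodies euc}. One first extends $\cov$ and the mixed covolume to $\Gamma$-convex bodies by approximating them by simple convex $\Gamma$-polyhedra of prescribed combinatorics; Theorem~\ref{thm:cov def pos} then passes to the limit to give the reversed Alexandrov--Fenchel inequality $\cov(K_1,K_2,K_3)^2\le\cov(K_1,K_1,K_3)\cov(K_2,K_2,K_3)$, and $K_3=\H^2$ gives the inequality for $\am$ --- this much uses no equality case and already delivers the positive semidefiniteness above. For the equality case one imitates Lemma~\ref{lem:algebre}: equality in $\am(K_1,K_2)^2\le\am(K_1)\am(K_2)$ forces the measures $\mu(K_1,\H^2)$ and $\mu(\alpha K_2,\H^2)$ to coincide, and one concludes with a Christoffel-type uniqueness statement --- the analogue of Theorem~\ref{them: christ} --- asserting that the first area measure $\mu(K,\H^2)$ determines the $\Gamma$-convex body $K$. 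I expect this last point to be the main obstacle: it amounts to injectivity of the linearized area operator, which in the smooth setting is a second-order operator on $\Gamma$-invariant functions on $\H^2$, and the point is that, unlike the Christoffel operator $\tfrac12(\Delta_{\Sph^2}+2)$ on the round sphere --- which is indefinite and has the $\lambda_1$-eigenspace (the point support functions) in its kernel --- the corresponding operator on the compact hyperbolic surface $\H^2/\Gamma$ is definite, hence injective. This is the ``deeper reason'' promised after Theorem~\ref{thm:cov def pos}, and it is exactly the mechanism by which negative curvature turns the Lorentzian signature of the Euclidean case into the positive definiteness asserted here.
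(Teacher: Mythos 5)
Your argument is correct and follows essentially the same route as the paper: the reduction of positive definiteness to the reversed Cauchy--Schwarz inequality with its equality case (Theorem~\ref{thm: cas eglaite mink}) is exactly the intended derivation, and your sketch of that theorem reproduces the paper's first argument (approximation by simple $\Gamma$-polyhedra, the analogue of Lemma~\ref{lem:algebre}, and the Christoffel-type uniqueness on $\H^2/\Gamma$ via the definiteness of the relevant Laplace-type operator). The only remarks worth adding are that the paper also gives a shorter second proof via the explicit formula $\am(K)=\int_{\H^2/\Gamma}h_K^2+\frac12\|\nabla h_K\|^2$, which you correctly identify as the ``deeper reason,'' and that your explicit restriction to the span $V$ of $H^\Gamma(P)$ is a point of care the paper glosses over.
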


Similarly to the convex body case, we deduce the following.

\begin{corollary}
Let $P$ be a convex $\Gamma$-polyhedron, such that the induced metric on $\partial P/\Gamma$  belongs to  $\mathcal{M}(S, \alpha)$. Then $H_1(P)$ is 
a convex spherical polyhedron of dimension $\leq m-1$, which embeds isometrically into a Veech leaf of $\mathcal{M}(S, \alpha)$. 
\end{corollary}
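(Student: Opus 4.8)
The plan is to combine Theorem~\ref{thm prin fuch} with the form-preserving linear map recorded at the end of Section~\ref{fuchsian}, following the same scheme as the corollary to Theorem~\ref{thm poly r3}, but with ``positive definite'' in place of ``Lorentzian''.

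First I would turn $H^\Gamma(P)$ into a spherical polyhedron. Let $V\subset\R^m$ be the linear span of the convex polyhedral cone $H^\Gamma(P)$, so that $d:=\dim V\le m$, with $d=m$ exactly when $P$ is simple. By Theorem~\ref{thm prin fuch} the quadratic form $\am_P$ is positive definite on $\R^m$, hence on $V$, so $\{h\in V:\am_P(h)=1\}$ is an ellipsoid which, with the metric induced by $\am_P$, is a round sphere $\Sph^{d-1}$, a totally geodesic subsphere of $\Sph^{m-1}=\{h\in\R^m:\am_P(h)=1\}$. Quotienting $H^\Gamma(P)$ by positive homotheties amounts to intersecting the cone with this ellipsoid. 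The cone $H^\Gamma(P)$ is pointed (it contains no line, since all support numbers of convex $\Gamma$-polyhedra are positive), so its trace on the ellipsoid lies in an open hemisphere and, being the section of a convex polyhedral cone carved out by finitely many linear inequalities $\ell_{ij}(h)>0$, is a convex spherical polyhedron. Thus $H_1^\Gamma(P)$ becomes a convex spherical polyhedron of dimension $d-1\le m-1$.

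Next I would realize the embedding. The map sending $Q\in H_1^\Gamma(P)$ to the induced metric on $\partial Q/\Gamma$ lands in the Veech leaf $\mathcal{M}(S,\alpha,h)$ and is injective by the uniqueness part of Theorem~\ref{thm:FBr}. In an unfolding chart of this leaf it becomes the projectivization of the injective linear map $\iota\colon\R^d\to\mathbb{C}^k$ of Section~\ref{fuchsian}, whose image lies in the chart and which carries $\am_P$ to the restriction of the area Hermitian form $H$ on $\mathbb{C}^k$. Because $\am_P$ is positive definite on $V$, the subspace $\iota(V)$ is totally real, meets $\ker H$ trivially, and carries a positive definite restriction of $H$; in particular $\iota$ maps the cone $H^\Gamma(P)$ into the locus $\{H>0\}$ on which the complex pseudo-sphere structure of the leaf is defined. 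Passing to $\mathbb{CP}^{k-1}$ (flat metrics being considered up to similarity, i.e.\ up to $\mathbb{C}^*$) one checks injectivity of the composite on $H_1^\Gamma(P)$: two real vectors proportional over $\mathbb{C}^*$ are proportional over $\R$, and the normalization $\am_P=1$ together with the pointedness of $H^\Gamma(P)$ forces the scalar to be $+1$.

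It remains to see that the embedding is isometric, which is where I expect the real work to be. One has to identify the complex pseudo-sphere metric of the leaf on the totally geodesic real slice cut out by $\iota(V)$ --- the fixed locus of the anti-holomorphic involution given by complex conjugation --- and verify that on the region $\{H>0\}$ this slice inherits exactly the round metric of the corresponding real sphere. Granting this, the embedding is isometric because the spherical metric on $H_1^\Gamma(P)$ and the metric on the leaf are both built from the area form, which $\iota$ preserves; so $H_1^\Gamma(P)$ embeds isometrically into $\mathcal{M}(S,\alpha,h)$ as a convex spherical polyhedron of dimension $\le m-1$. Everything except this last metric bookkeeping is the routine transcription of the convex-body argument, now using that $\am_P$ is genuinely positive definite rather than Lorentzian.
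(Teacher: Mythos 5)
Your argument is correct and takes essentially the same route the paper intends: the paper offers no explicit proof here, deducing the statement ``similarly to the convex body case'' from the positive definiteness of $\am_P$ (Theorem~\ref{thm prin fuch}) together with the area-preserving injective linear map of $H^\Gamma(P)$ into an unfolding chart of a Veech leaf, as described at the end of Section~1.2. Your extra verifications (pointedness of the cone, total reality of the image and its total geodesy, injectivity up to homothety via Theorem~\ref{thm:FBr}) only make explicit what the paper leaves implicit.
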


\begin{remark}{\rm
There is a similar construction in the Minkowski plane.
Formally in the same way than   the case of convex compact polygons (see Remark~\ref{remark polygon}),
one obtains spherical orthoschemes \cite{Fem}. But they are not related to spaces of flat metrics.
}\end{remark}

\section{Area form on Fuchsian convex bodies}

Let $\mathcal{K}_\Gamma^3$ be space of convex $\Gamma$-sets of $\R^3$, i.e. convex  sets contained in $I^+(0)$ and invariant under the action of $\Gamma$ (it can be checked that such a convex set has only space-like support planes). 
The covolume is defined as in the polyhedral case. The \emph{mixed covolume} is
defined similar to the mixed volume  in \eqref{defvom}.
We will to prove the following.

\begin{theorem}\label{thm: cas eglaite mink}
For $K_1,K_2\in \mathcal{K}_\Gamma^3$,
$$\ap(K_1,K_2)^2\leq \ap(K_1)\ap(K_2) $$ with equality if and only if there is $\alpha>0$  such that $K_1=\alpha K_2$.
\end{theorem}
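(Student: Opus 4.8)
The plan is to mirror the Euclidean strategy of Section~5 (Theorems~\ref{AF bodies euc}--\ref{thm: cas eglaite eucl}), but with the sign of the Alexandrov--Fenchel inequality reversed, as dictated by Theorem~\ref{thm:cov def pos}. First I would establish the Minkowski-space analog of the Alexandrov--Fenchel inequality for the mixed covolume: for $K_1,K_2,K_3\in\mathcal{K}_\Gamma^3$,
\[
\cov(K_1,K_2,K_3)^2\le\cov(K_1,K_1,K_3)\,\cov(K_2,K_2,K_3)~.
\]
This is obtained exactly as Theorem~\ref{AF bodies euc} was deduced from Theorem~\ref{thm:AF1 simple}: approximate general $\Gamma$-convex bodies by simple circumscribed convex $\Gamma$-polyhedra with parallel faces and the same combinatorics, and pass to the limit in the polarized form of Theorem~\ref{thm:cov def pos}. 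Positive definiteness of $\cov_P(\cdot,\cdot,h)$ says that on the finite-dimensional space of support vectors the bilinear form $\cov_P(\cdot,\cdot,h)$ is positive definite; the reversed Cauchy--Schwarz inequality on a positive-definite space is just ordinary Cauchy--Schwarz with equality iff the vectors are proportional. (Note the subtlety: here one needs the reversed inequality in the \emph{third} slot as well, i.e. one must run the argument so that the fixed body $K_3$ enters positively — this is automatic since $\cov_P(\cdot,\cdot,h)$ is positive definite for every admissible $h$, not merely for $h=\mathbf{1}$.)

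Next, using $\ap(K)=3\cov(K,K,\mathbf{B}_\Gamma)$ where $\mathbf{B}_\Gamma$ plays the role that $\mathbf B$ played in the Euclidean case (concretely, the normalization coming from the hyperboloid $\H^2$, with support numbers $\mathbf{1}$), I would specialize the covolume inequality with $K_3=\mathbf{B}_\Gamma$ to get
\[
\ap(K_1,K_2)^2\le\ap(K_1)\,\ap(K_2)
\]
for all $K_1,K_2\in\mathcal{K}_\Gamma^3$, which is the inequality half of the theorem. For the equality case I would transplant the argument of Lemma~\ref{lem:algebre} to the Minkowski setting: assuming $\ap(K_1,K_3)^2=\ap(K_1)\ap(K_3)$, form $\ap(K_2+\lambda K_3,K_1)^2-\ap(K_2+\lambda K_3)\ap(K_1)$ and its analog with the roles perturbed by $t K_1$, extract the quadratic-in-$\lambda$ and coefficient-at-$t^2$ inequalities — now both of the form $A\lambda^2\pm2\lambda B+C\le 0$ since the covolume AF inequality points the other way — conclude $B^2-AC\le 0$, hence $B=0$ when $A=0$. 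The conclusion is that $\mu_\Gamma(K_3,\mathbf{B}_\Gamma)$ is proportional to $\mu_\Gamma(K_1,\mathbf{B}_\Gamma)$, where $\mu_\Gamma(\cdot,\mathbf{B}_\Gamma)$ is the $\Gamma$-invariant analog of the Christoffel measure on $\H^2/\Gamma$.

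The main obstacle will be the Minkowski Christoffel-type uniqueness statement that replaces Theorem~\ref{them: christ}: one must show that $\mu_\Gamma(K_1,\mathbf{B}_\Gamma)=\mu_\Gamma(\alpha K_2,\mathbf{B}_\Gamma)$ forces $K_1=\alpha K_2$. Here the key difference from $\R^3$ is that $\Gamma$-convex bodies are \emph{not} translation-invariant, so the ambiguity-by-a-point that appears in Theorem~\ref{them: christ} and Theorem~\ref{thm: cas eglaite eucl} simply disappears — this is exactly why the statement asserts $K_1=\alpha K_2$ with no translation term. Concretely, the support function of a $\Gamma$-convex body descends to a function on the unit tangent bundle (or on $\H^2/\Gamma$ after a choice of radial gauge), the measure $\mu_\Gamma(K,\mathbf{B}_\Gamma)$ corresponds to $(\Delta_{\H^2}+2\,\mathrm{something})h$ — more precisely to an operator of the form $h\mapsto(\mathrm{cof}\,\Hess h)\cdot$(metric) linearized at the hyperboloid, which on $\H^2$ reads $\mathrm{Hess}\,h - h\,g$ (the shape-operator-type combination), and whose kernel on a \emph{compact} quotient $\H^2/\Gamma$ is trivial because the relevant eigenvalue does not occur in the spectrum of the Laplacian on a closed hyperbolic surface (unlike the sphere, where $\lambda_1=2$ produces the linear functions). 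I would cite or prove this injectivity — it is the Fuchsian counterpart of Theorem~8.3.6 in \cite{schneider} — and then Theorem~\ref{thm: cas eglaite mink} follows by the same endgame as Theorem~\ref{thm: cas eglaite eucl}: $\alpha=\ap(K_1,K_3)/\ap(K_1)$, $\mu_\Gamma(K_3,\mathbf{B}_\Gamma)=\mu_\Gamma(\alpha K_1,\mathbf{B}_\Gamma)$, hence $K_3=\alpha K_1$.
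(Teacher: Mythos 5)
Your proposal follows essentially the same route as the paper's first proof: the reversed Alexandrov--Fenchel inequality for the mixed covolume obtained by polyhedral approximation from Theorem~\ref{thm:cov def pos}, the sign-reversed version of the algebraic Lemma~\ref{lem:algebre} (Lemma~\ref{lem:algebre fuc}), and the Fuchsian Christoffel uniqueness statement (Theorem~\ref{them: christ mink}), whose proof via the spectrum of the Laplacian on a compact hyperbolic surface you correctly sketch, including the absence of a translation ambiguity. Note only that the paper also records a much shorter second argument, via the explicit formula $\am(K)=\int_{\H^2/\Gamma}h_K^2+\tfrac12\|\nabla h_K\|^2$, which makes the positive definiteness of the area form (and hence Cauchy--Schwarz together with its equality case) immediate.
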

Below three proofs from different points of view are given.

\subsection{First argument: mimicking the convex bodies case}

As $K_1,K_2,K_3$ can be approximated by a sequence of three simple convex $\Gamma$-polyhedra with parallel faces and same combinatorics,
Theorem~\ref{thm:cov def pos}, Cauchy--Schwarz inequality and a limit argument give the following (see \cite{Fgafa} for details).

\begin{theorem}[Reversed Alexandrov--Fenchel inequality]\label{AF bodies mink}
For $K_1,K_3,K_3\in \mathcal{K}^3_\Gamma$,
$$\cov(K_1,K_2,K_3)^2\leq \cov(K_1,K_1,K_3)\cov(K_2,K_2,K_3)~. $$
\end{theorem}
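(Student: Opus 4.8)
The strategy is to reduce the inequality to the case of three simple convex $\Gamma$-polyhedra with one common combinatorial type, where it becomes an instance of the Cauchy--Schwarz inequality for the positive definite form of Theorem~\ref{thm:cov def pos}, and then to remove the polyhedral restriction by approximation and continuity.

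\emph{The polyhedral case.} Let $P$ be a simple convex $\Gamma$-polyhedron with $m$ orbits of faces, and suppose $K_1,K_2,K_3\in H^\Gamma(P)$, identified with their support vectors $h_1,h_2,h_3\in\R^m$. Minkowski sums and positive combinations of elements of $H^\Gamma(P)$ again lie in $H^\Gamma(P)$, with the corresponding sums of support vectors, so the inclusion--exclusion definition of the mixed covolume evaluates on this triple to the symmetric $3$-linear form $\cov_P$; thus $\cov(K_1,K_2,K_3)=\cov_P(h_1,h_2,h_3)$, and likewise for the other two terms. By Theorem~\ref{thm:cov def pos} applied with $h=h_3\in H^\Gamma(P)$, the bilinear form $(k,p)\mapsto\cov_P(k,p,h_3)$ on $\R^m$ is positive definite, so the ordinary Cauchy--Schwarz inequality applied to the vectors $h_1,h_2$ gives
\[
\cov_P(h_1,h_2,h_3)^2\leq\cov_P(h_1,h_1,h_3)\,\cov_P(h_2,h_2,h_3)~,
\]
which is the asserted inequality in this case; note that only $h_3\in H^\Gamma(P)$ is used, and no sign condition on $h_1,h_2$.

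\emph{Passing to convex $\Gamma$-sets.} The mixed covolume depends continuously on its arguments for uniform convergence of support functions on $\H^2$ (equivalently, Hausdorff convergence of the $\Gamma$-sets on compact subsets of $I^+(0)$): as in the Euclidean theory one has $\cov(h,K_2,K_3)=\frac{1}{3}\int_{\H^2/\Gamma}h\,\mathrm{d}\mu(K_2,K_3)$ for a Radon measure $\mu(K_2,K_3)$ on the compact surface $\H^2/\Gamma$ depending weakly continuously on the pair $(K_2,K_3)$. Hence it suffices to approximate a given triple $K_1,K_2,K_3$ by triples of simple convex $\Gamma$-polyhedra with pairwise parallel faces and one common combinatorics, and to pass to the limit in the polyhedral inequality. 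Such an approximation is obtained by first approximating each $K_i$ crudely by a convex $\Gamma$-polyhedron $\tilde P_i$, then $\Gamma$-equivariantly refining the Gauss image of $\tilde P_1+\tilde P_2+\tilde P_3$ to that of a simple convex $\Gamma$-polyhedron $P_\Sigma$; since the Gauss image of $P_\Sigma$ then refines that of each $\tilde P_i$, the Minkowski sum $\tilde P_i+\epsilon P_\Sigma$ has the same Gauss image as $P_\Sigma$, hence is simple of that type, and converges to $\tilde P_i$ as $\epsilon\to 0$. A diagonal argument produces the required sequence.

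The one genuinely delicate step is this last one: arranging that \emph{all three} approximating polyhedra become simple with the \emph{same} combinatorial type while remaining close to the $K_i$. It is the $\Gamma$-equivariant analogue of the classical simultaneous polytopal approximation lemma for convex bodies, and is carried out in detail in \cite{Fgafa} (see also \cite{fillastre}); everything else is soft, the essential input being the positive definiteness of $\cov_P(\,\cdot\,,\,\cdot\,,h)$ from Theorem~\ref{thm:cov def pos}.
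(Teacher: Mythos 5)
Your proof follows essentially the same route as the paper, which likewise obtains Theorem~\ref{AF bodies mink} by approximating $K_1,K_2,K_3$ with simple convex $\Gamma$-polyhedra of common combinatorics, applying Cauchy--Schwarz for the positive definite form of Theorem~\ref{thm:cov def pos}, and passing to the limit (the paper states this in one sentence and defers the details to \cite{Fgafa}). Your write-up simply fills in the approximation step that the paper leaves to the reference, and does so correctly.
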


Identify $K\in  \mathcal{K}^3_\Gamma$ with its support function
$$h_K(v)=\sup_K \langle v,x\rangle_{2,1} $$
which is $\Gamma$-invariant. We consider $h_K$ as a function on the compact surface
$\H^2/\Gamma$. One can also prove that there exists a Radon measure $\mu(K_2,K_3)$ on $\mathbb{H}^2/\Gamma$ such that \cite[Remark~3.15]{fv}

$$\cov(h,K_2,K_3)=-\frac{1}{3}\int_{\mathbb{H}^2/\Gamma} h \operatorname{d}\mu(K_2,K_3)~. $$

In particular, $\mu(K,K)$ is the area measure of $K$, and 
$$\cov(K)=-\frac{1}{3}\int_{\mathbb{H}^2/\Gamma} h_K \operatorname{d}\mu(K)$$
and
$$\am(K)=\int_{\mathbb{H}^2/\Gamma}\operatorname{d}\mu(K,K) =3\v(K,K,\bold{H}) $$
where $\bold{H}$ is the convex $\Gamma$-set bounded by $\H^2$.

Reversed Alexandrov--Fenchel inequality (Theorem~\ref{AF bodies mink}) then reads as
follows:
for $K_1,K_2\in \mathcal{K}_\Gamma^3$,
$$\am(K_1,K_2)^2\leq \am(K_1)\am(K_2)~. $$

But by symmetry of the mixed volume,
$$ \am(K)= \int_{\mathbb{H}^2/\Gamma}h_K\operatorname{d}\mu(K,\bold{H})~.$$

The $\Gamma$-Christoffel problem asks for existence and uniqueness of a  convex $\Gamma$-set $K$
such that, for a prescribed Radon measure $\mu$ on $\mathbb{H}^2/\Gamma$, $\mu=\mu(K,\bold{H})$. The uniqueness result is as follows.
It is a simple consequence of properties of the Laplacian on a compact hyperbolic manifold.

\begin{theorem}[{\cite{fv}}]\label{them: christ mink}
We have $\mu(K_1,\bold{H})=\mu(K_2,\bold{H}) $ if and only if  
$K_1= K_2$.
\end{theorem}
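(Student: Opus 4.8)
The final statement in the excerpt is Theorem~\ref{them: christ mink}: for $K_1, K_2 \in \mathcal{K}_\Gamma^3$, we have $\mu(K_1,\bold{H}) = \mu(K_2,\bold{H})$ if and only if $K_1 = K_2$. This is the $\Gamma$-equivariant Christoffel uniqueness statement, advertised in the excerpt as "a simple consequence of properties of the Laplacian on a compact hyperbolic manifold." Let me plan a proof.

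**Approach.** The plan is to reduce, exactly as in the classical spherical Christoffel problem (Theorem~\ref{them: christ}), the identity $\mu(K_1,\bold{H}) = \mu(K_2,\bold{H})$ to a statement about the support functions $h_1 = h_{K_1}$ and $h_2 = h_{K_2}$, viewed as functions on the compact hyperbolic surface $\H^2/\Gamma$, and then invoke the spectral theory of the Laplacian there. The key point is that the measure $\mu(K,\bold{H})$ is, up to an explicit differential operator, the mixed area measure of $K$ with the reference body $\bold{H}$ bounded by the hyperboloid, and this operator is a first-order perturbation of $\Delta$ by the identity: concretely, for $C^2$ support functions one has, in analogy with the Euclidean formula $\mathrm{d}\mu(h,\bold{B}) = \frac12(\Delta h + 2h)\,\mathrm{d}\sigma$ on $\mathbb{S}^2$, a formula of the shape $\mathrm{d}\mu(h,\bold{H}) = \frac12(\Delta h - h)\,\mathrm{d}A$ on $\H^2/\Gamma$, where $\Delta$ is the (geometer's, negative-spectrum) Laplacian and $\mathrm{d}A$ the hyperbolic area form. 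The sign change relative to the sphere is the same $\cos \to \cosh$ phenomenon noted before Theorem~\ref{thm:cov def pos}: the operator acquires the potential $-h$ rather than $+2h$.

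**Key steps, in order.** First I would record the pointwise formula: the linearization at a smooth $\Gamma$-invariant support function $h$ of the area-measure density of $K$ against $\bold{H}$ equals $c(\Delta h - h)$ times the hyperbolic volume form, for a positive constant $c$; this is the Fuchsian analogue of \cite[Thm.~8.3.6]{schneider} and follows by writing the second fundamental form / shape operator of the graph of $h$ over $\H^2$ and extracting the mixed term with the hyperboloid, using \eqref{eq: supp num mink} in the polyhedral approximation or a direct computation in the smooth case. Second, observe that the operator $L := \Delta - 1$ on $L^2(\H^2/\Gamma)$ is self-adjoint with spectrum contained in $(-\infty, -1]$, since $\Delta \le 0$; in particular $L$ is injective — there are no harmonic-type obstructions because the constant $-1$ strictly separates the spectrum from $0$. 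This is where the hypothesis that $\H^2/\Gamma$ is \emph{compact} (so $\Delta$ has discrete spectrum and no continuous part reaching $0$) and \emph{hyperbolic} (so the potential is exactly $-1 < 0$) is used. Third, from $\mu(K_1,\bold{H}) = \mu(K_2,\bold{H})$ and the linearity of $K \mapsto \mu(K,\bold{H})$ in the support function — valid on differences of support functions, hence on $C^2$ functions, by the same continuity/density argument used for the Euclidean mixed area measure — conclude $L(h_1 - h_2) = 0$ as distributions, hence $h_1 = h_2$ by injectivity of $L$; since a convex $\Gamma$-body is determined by its support function, $K_1 = K_2$. Finally I would note the contrast with the Euclidean case, where $L = \Delta + 2$ has the $1$-eigenspace of linear functions in its kernel, which is exactly why Theorem~\ref{them: christ} only gives uniqueness up to translation whereas here we get uniqueness on the nose — there is no room for translations among $\Gamma$-invariant bodies.

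**Main obstacle.** The genuinely substantive step is the first one: establishing the clean identity $\mathrm{d}\mu(h,\bold{H}) = c(\Delta h - h)\,\mathrm{d}A$ with the correct sign on the zeroth-order term, rigorously and for merely continuous (not smooth) bodies. In the smooth case it is a computation with the support function parametrization of $\partial K$ over $\H^2$; the sign on $h$ comes from the constant curvature $-1$ of $\H^2$ and is the crux of why the whole theory is "reversed" relative to Schneider's book. For general $K_1, K_2 \in \mathcal{K}_\Gamma^3$ one passes to the distributional statement by approximation by simple convex $\Gamma$-polyhedra with parallel faces — the same device used for Theorem~\ref{AF bodies mink} — and uses the weak form $\cov(h, K_2, \bold{H}) = -\frac13 \int_{\H^2/\Gamma} h\,\mathrm{d}\mu(K_2,\bold{H})$ together with the fact that testing against all $h \in C^\infty(\H^2/\Gamma)$ (a set dense enough, being spanned by differences of support functions) detects equality of Radon measures. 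Once the operator identity is in hand, the spectral argument is routine; I would cite \cite{fv} for the analytic details and keep the write-up at the level of: state the linearization formula, note $\mathrm{spec}(\Delta - 1) \subset (-\infty,-1]$ on the compact hyperbolic surface, deduce injectivity, conclude.
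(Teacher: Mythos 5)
Your argument is precisely the one the paper intends: the paper gives no proof of Theorem~\ref{them: christ mink}, citing \cite{fv} and remarking only that it is ``a simple consequence of properties of the Laplacian on a compact hyperbolic manifold,'' which is exactly your reduction of $\mu(K_1,\mathbf{H})=\mu(K_2,\mathbf{H})$ to the injectivity of a strictly negative shift of $\Delta$ on the compact surface $\H^2/\Gamma$, with no kernel to quotient out (in contrast to the spherical case, where the eigenvalue~$2$ eigenspace of linear functions accounts for translations). The only slip is the zeroth-order constant: comparing with \eqref{eq:aire lor} via Green's formula, the density of $\mu(\cdot,\mathbf{H})$ is $h-\tfrac12\Delta h$, so the operator is $\Delta-2$ rather than your $\Delta-1$ --- immaterial, since $\Delta-c$ is injective for any $c>0$.
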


The proof of the following lemma is the same as the proof of Lemma~\ref{lem:algebre}.

\begin{lemma}\label{lem:algebre fuc}
Let $K_1,K_3\in \mathcal{K}_\Gamma^3$. If
$$\am(K_1,K_3)^2=\am(K_1)\am(K_3) $$
then for any $K_2\in \mathcal{K}^3_\Gamma$,
$$\am(K_1)\am(K_2,K_3)-\am(K_1,K_2)\am(K_1,K_3)=0~. $$
\end{lemma}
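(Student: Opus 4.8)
The plan is to mimic exactly the proof of Lemma~\ref{lem:algebre}, replacing the Alexandrov--Fenchel inequality for Euclidean convex bodies by the reversed Alexandrov--Fenchel inequality for Fuchsian convex bodies (Theorem~\ref{AF bodies mink}), i.e. working with $\am(K_1,K_2)^2 \le \am(K_1)\am(K_2)$ instead of $\ge$. First I would introduce the quadratic-in-$\lambda$ trick: for every $\lambda \ge 0$ the body $K_2+\lambda K_3$ lies in $\mathcal{K}^3_\Gamma$ (Minkowski sums of $\Gamma$-convex sets are $\Gamma$-convex), so applying the reversed inequality to the pair $(K_2+\lambda K_3, K_1)$ gives
\[
\am(K_2+\lambda K_3,K_1)^2 - \am(K_2+\lambda K_3)\am(K_1) \le 0~,
\]
which, expanding by bilinearity of the mixed area, is of the form $A\lambda^2 - 2\lambda B + C \le 0$ for all $\lambda > 0$, with
\[
A=\am(K_1,K_3)^2-\am(K_1)\am(K_3),\quad B=\am(K_2,K_3)\am(K_1)-\am(K_1,K_2)\am(K_1,K_3),
\]
\[
C=\am(K_1,K_2)^2-\am(K_1)\am(K_2)~.
\]

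Next, exactly as in Lemma~\ref{lem:algebre}, I would run the same computation with $K_2+t\lambda K_1$ and $K_3+tK_1$ in place of $K_2+\lambda K_3$ and $K_1$, extract the coefficient of $t^2$ from the reversed inequality, and obtain $A\lambda^2 + 2\lambda B + C \le 0$ for all $\lambda > 0$. Adding the two inequalities (or rather: both hold, and a quadratic $a\lambda^2 + b\lambda + c$ that is $\le 0$ for all $\lambda > 0$ together with its mirror $a\lambda^2 - b\lambda + c \le 0$) forces $B^2 - AC \le 0$; here the sign is handled just as in the Euclidean case, since multiplying both sides by $-1$ turns the reversed inequalities into the shape treated there. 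By the hypothesis $\am(K_1,K_3)^2 = \am(K_1)\am(K_3)$ we have $A=0$, hence $B=0$, which is precisely the claimed identity
\[
\am(K_1)\am(K_2,K_3)-\am(K_1,K_2)\am(K_1,K_3)=0~.
\]

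I do not expect any genuine obstacle here: the statement of the lemma itself says the proof is the same as that of Lemma~\ref{lem:algebre}, and everything needed — bilinearity of $\am$, closure of $\mathcal{K}^3_\Gamma$ under Minkowski addition, and the reversed Alexandrov--Fenchel inequality — has already been established in the excerpt. The only point requiring a moment's care is bookkeeping the reversed sign: one should check that the argument deducing $B^2 - AC \le 0$ from "$A\lambda^2 \pm 2B\lambda + C \le 0$ for all $\lambda>0$" is the mirror image of the Euclidean one (it is: if $A\le 0$ and $A\lambda^2-2B\lambda+C\le 0$ for all $\lambda>0$, evaluating near the vertex of the downward parabola and combining with the mirror inequality yields $B^2 \le AC$; and since $A=0$ this gives $B=0$ directly, even more simply, because $-2B\lambda + C \le 0$ and $2B\lambda + C \le 0$ for all $\lambda>0$ force $B=0$). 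So in fact, once $A=0$ is substituted, the first displayed inequality alone already reads $-2B\lambda + C \le 0$ for all $\lambda>0$ and the mirror one reads $2B\lambda + C \le 0$ for all $\lambda > 0$, which immediately gives $B=0$; I would present it in this streamlined form while noting the parallel with Lemma~\ref{lem:algebre}.
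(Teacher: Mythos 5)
Your proof is correct and is essentially the paper's own: the authors simply state that the proof of Lemma~\ref{lem:algebre fuc} is the same as that of Lemma~\ref{lem:algebre}, with the Alexandrov--Fenchel inequality replaced by its reversed Fuchsian analogue (Theorem~\ref{AF bodies mink}), which is exactly what you do. Your sign bookkeeping is right, and your streamlined observation that $A=0$ reduces the two inequalities to $\mp 2B\lambda + C \le 0$ for all $\lambda>0$, forcing $B=0$ directly, is a harmless simplification of the same argument.
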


So, exactly as in the Euclidean case, if equality occurs in the reversed Alexandrov--Fenchel inquality, there is a positive $\alpha$ such that 
$$\mu(K_3,\bold{H})=\mu(\alpha K_1,\bold{H}) $$
and by Theorem~\ref{them: christ mink}  and Theorem~\ref{AF bodies mink}, Theorem~\ref{thm: cas eglaite mink} is proved.

\subsection{Second argument: explicit formula for the area}

Here the argument is similar to that Section~\ref{sobolev}, but much simpler.
If $h_K$ is the support function of a convex $\Gamma$-set $K$, then
\begin{equation}\label{eq:aire lor}\am(K)= \int_{\mathbb{H}^2/\Gamma} h_K^2 + \frac{1}{2}\|\nabla h_K\|^2 \end{equation}
where $\nabla$ is the weak hyperbolic gradient, and $\|\cdot\|$ is the norm given by the hyperbolic metric. This form is obviously positive-definite on any vector space spanned by support functions of $\Gamma$-convex sets.

Equation \eqref{eq:aire lor} can be proved similarly to \eqref{eq:a lap}, first proving the formula in the regular case, then use an approximation argument (see Appendix~A in \cite{bf} for smooth approximation of $\Gamma$-convex sets). Details are left to the interested reader.

\subsection{Relations with the  covolume}\label{sec covolume}

There is no need of an ambient Lorentzian metric to state the reversed Alexandrov--Fenchel inequlity (Theorem~\ref{thm: cas eglaite mink}).
Actually, a fundamental domain for the action of $\Gamma$ can be given by a convex cone $C$, contained in the cone
$I^+(0)$. For example, one can consider a Dirichlet fundamental convex polyhedron in $\H^2\subset I^+(0)$ for the action of $\Gamma$, which defines a convex cone $C$ in $I^+(0)$. The covolume of a convex $\Gamma$-polyhedron is the volume of $C\setminus (K\cap C)$. 
Once the convex cone $C$ is fixed, $K\cap C$ is a \emph{$C$-coconvex body}.
More precisely, a conconvex body (in $C$) is a convex set $A\subset C$ 
such that $C\setminus A$ is bounded and not empty.

A reversed  Alexandrov--Fenchel inequality holds for $C$-coconvex bodies, it is actually deduced from the classical Alexandrov--Fenchel inequality for convex bodies, see
\cite{KT}. The description of the equality case is still unknown.

\begin{question}\label{question covol}
Does equality occur in reversed Alexandrov--Fenchel inequality for $C$-coconvex bodies if and only if $K_1=\lambda K_2$ for some $\lambda>0$?
\end{question}

A positive answer to Question~\ref{question covol} would imply Theorem~\ref{thm: cas eglaite mink} as a very particular case.

We know that the covolume is strictly convex \cite{Fco}. On the one hand, this supports the idea that Question~\ref{question covol} has a positive answer. On the other hand,  the positive definiteness of the Hessian of the covolume (in a suitable sense) is related to 
Theorem~\ref{thm: cas eglaite mink} (see the beginning of the proof of Theorem~\ref{thm:cov def pos}).

%
%

\section{Acknowledgement}

This note is based on remarks 4.3 and 4.4 in \cite{FI}, see also \cite{Fgafa,Fem}. 
Theorem~\ref{thm poly r3} of the present paper is Corollary~2.9 in 
\cite{FI}. Theorem~\ref{thm:cov def pos} comes from \cite{Fgafa}. Theorem~\ref{thm prin fuch} of the present paper is new.
The subject of \cite{FI} is to equip different spaces of convex polytopes with parallel faces (in any dimension) with hyperbolic structures using the mixed volume.

The authors thank Duc Manh Nguyen for useful conversations.

\begin{spacing}{0.9}
\begin{footnotesize}
\bibliography{flat}
\bibliographystyle{alpha}
\end{footnotesize}
\end{spacing}

\end{document}